\numberwithin{equation}{section}
\newcommand{\ind}{{\bf 1}}
\newcommand{\proba}{\mathbb P}
\newcommand{\esp}{{\mathbb E}}
\newcommand{\supp}{{\rm{supp}}}
\newcommand{\defe}{\mathrel{\mathop:}=}
\newcommand{\inv}{^{-1}}
\newcommand{\calB}{{\cal B}}
\newcommand{\filF}{{\cal F}}
\newcommand{\calG}{{\cal G}}
\newcommand{\calI}{{\cal I}}
\newcommand{\calP}{{\cal P}}
\def\indt#1{\{#1_t\}_{t\in T}}
\def\lap{{L^\alpha_+}}
\newcommand{\eqnh}{\begin{eqnarray*}}
\newcommand{\eqne}{\end{eqnarray*}}
\newcommand{\eqnhn}{\begin{eqnarray}}
\newcommand{\eqnen}{\end{eqnarray}}
\newcommand{\equh}{\begin{equation}}
\newcommand{\eque}{\end{equation}}
\def\summ#1#2#3{\sum_{#1 = #2}^{#3}}
\newcommand{\eqd}{\stackrel{\rm d}{=}}
\newcommand{\widebar}{\overline}
\def\Eint#1{\, \int^{\!\!\!\!\!\!\!\!{e}}_{#1}}
\def\topp#1{^{(#1)}}
\def\ddfrac#1#2#3{\frac{\d(#1\circ #2)}{\d #3}}
\def\lap{L^\alpha_+}
\def\nn#1{{\left\|#1\right\|}}
\def\babs#1{\Big|#1\Big|}
\def\ccbb#1{\left\{#1\right\}}
\def\bccbb#1{\Big\{#1\Big\}}
\def\sccbb#1{\{#1\}}
\def\bpp#1{\Big(#1\Big)}
\def\spp#1{(#1)}
\def\vv#1{{\bf #1}}
\def\d{{\rm d}}
\def\bbR{{\mathbb R}}
\def\P{{\mathbb P}}
\def\mfa{\mbox{ for all }}
\def\adaptF#1{\{#1_t,\filF_t:0\leq t<\infty\}}
\def\wt#1{\widetilde{#1}}
\def\wb#1{\widebar{#1}}
\def\ifhead#1#2{\left\{\begin{array}{#1@{\quad\mbox{ if }\quad}#2}}
\def\ifend{\end{array}\right.}
\def\ddfrac#1#2#3{\frac{\d#1\circ#2}{\d #3}}
\def\comment#1{}
\renewcommand\cal{\mathcal }
\renewcommand{\Bbb}{\mathbb}
\newcommand{\bbZ}{{\Bbb Z}}
\renewcommand\P{\Bbb P}
\renewcommand{\supp}{\mbox{{\rm supp}}}
\def\Eint{{\int^{\!\!\!\!\!\!\!{\rm e}}}}
 \newtheorem{theorem}{\bf Theorem}[section]
 \newtheorem{lemma}{\bf Lemma}[section]
 \newtheorem{corollary}{\bf Corollary}[section]
 \newtheorem{proposition}{\bf Proposition}[section]
 \newtheorem{definition}{\bf Definition}[section]
\theoremstyle{definition}
 \newtheorem{remark}{\bf Remark}[section]
 \newtheorem{Example}{\bf Example}[section]
\title{Decomposability for Stable Processes}
\author{Yizao Wang\thanks{
 Department of Statistics, The University of Michigan,
 439 W.\ Hall, 1085 S.\ University, Ann Arbor, MI 48109--1107. U.S.A.
  {\em E-mails:}
  \texttt{yizwang, sstoev@umich.edu}.
  } \thanks{Corresponding author. Fax: +1 7347634676},\ \ Stilian A.~Stoev$^*$\ \ and\ \  Parthanil Roy\thanks{Statistics and Mathematics Unit, Indian Statistical Institute, Kolkata 700108, India. {\em E-mail:} {\tt parthanil.roy@gmail.com}.
  }}
\begin{document}\sloppy
\maketitle

\begin{abstract}

\noindent We characterize all possible independent symmetric $\alpha$-stable (S$\alpha$S) 
components of an S$\alpha$S process, $0<\alpha<2$. 
In particular, we focus on stationary S$\alpha$S processes and their independent stationary S$\alpha$S components. We also develop a parallel characterization theory for max-stable processes. \medskip

{\it Keywords:} Sum-stable process; decomposition; minimal representation; mixed moving average; max-stable process

{\it 2010 MSC:} Primary 60G52, Secondary 60G70
\end{abstract}
\section{Introduction}

Recall that a random variable $Z$ has a {\em symmetric $\alpha$-stable} (S$\alpha$S) distribution with $0<\alpha\le 2$, 
if $\esp\exp(itZ) = \exp(-\sigma^\alpha|t|^\alpha)$ for all $t\in\mathbb R$ with some constant $\sigma>0$. A process 
$X=\indt X$ is said to be S$\alpha$S if all its finite linear combinations follow S$\alpha$S distributions.

In this paper, we investigate the general decomposability problem for S$\alpha$S processes with $0<\alpha<2$. Namely, let $X = \indt X$ be an S$\alpha$S process indexed by an arbitrary set $T$. Suppose that
\begin{equation}\label{eq:decomposition}
 \{X_t\}_{t\in T} \eqd \bccbb{ X_t^{(1)} + \cdots + X_t^{(n)}}_{t\in T},
\end{equation}
where `$\eqd$' means equality in finite-dimensional distributions, and $X\topp k = \indt{X\topp k}, k=1,\dots,n$
are {\em independent} S$\alpha$S processes. We will write $X\eqd X\topp1+\cdots+X\topp n$ 
in short, and each $X^{(k)}$ will be referred to as a {\em component} of $X$. The stability property readily 
implies that~\eqref{eq:decomposition} holds with $X\topp k\eqd n^{-1/\alpha} X \equiv  \{n^{-1/\alpha}X_t\}_{t\in T}$. 
The components equal in finite-dimensional distributions to a constant multiple of $X$ will be referred to as 
{\em trivial}. We are interested in the general structure of all possible {\it non-trivial} S$\alpha$S components of $X$.

Many important decompositions \eqref{eq:decomposition} of S$\alpha$S processes are already available 
in the literature: see for example Cambanis et al.~\cite{cambanis92characterization}, 
Rosi\'nski~\cite{rosinski95structure},   Rosi\'nski and Samorodnitsky~\cite{rosinski:samorodnitsky:1996}, Surgailis et al.~\cite{surgailis98mixing},  
Pipiras and Taqqu~\cite{pipiras02structure, pipiras:taqqu:2004cy}, and  Samorodnitsky~\cite{samorodnitsky05null}, to name a few. 
These results were motivated by studies of various probabilistic and structural aspects of the underlying 
S$\alpha$S processes such as ergodicity, mixing, stationarity, self-similarity, etc. 
Notably, Rosi\'nski~\cite{rosinski95structure} established a fundamental connection between 
stationary S$\alpha$S processes and non-singular flows. He developed important tools based on minimal 
representations of S$\alpha$S processes and inspired multiple decomposition results motivated by connections 
to ergodic theory.

In this paper, we adopt a different perspective. 
Our main goal is to characterize of
{\it all} possible S$\alpha$S decompositions \eqref{eq:decomposition}. 
Our results show how the dependence structure of an S$\alpha$S process determines the structure of its components. 

Consider S$\alpha$S processes $\indt X$ indexed by a complete separable metric space $T$ with
an integral representation
\equh\label{rep:integral}
 \{X_t\}_{t\in T} \eqd {\Big\{} \int_{S} f_t(s) M_\alpha(\d s) {\Big\}}_{t\in T},
\eque
where real-valued functions $\{f_t\}_{t\in T} \subset L^\alpha(S,\calB_S,\mu)$ are referred to as the {\it spectral functions} of $\indt X$.  By default, $M_\alpha$ is a real-valued S$\alpha$S random measure on the
standard Lebesgue space $(S,\calB_S,\mu)$, with a $\sigma$-finite control measure $\mu$.  
The spectral functions determine the finite-dimensional distributions of the process: for all $n\in\mathbb N, t_j\in T, a_j\in\mathbb R$,
\equh\label{eq:fdd}
\esp\exp\bpp{-i\summ j1na_jX_{t_j}} = \exp\bpp{-\int_S\babs{\summ j1na_jf_{t_j}}^\alpha\d\mu}\,.
\eque
Every separable in
probability S$\alpha$S process $X$ can be shown to have such a representation; see, for example, the excellent book by Samorodnitsky and Taqqu~\cite{samorodnitsky94stable} for detailed discussions on S$\alpha$S distributions and processes. Without loss of generality, we always assume that the spectral functions $\indt f\subset L^\alpha(S,\calB_S,\mu)$ have {\em full support}, i.e., $S = {\rm supp}\{ f_t,\ t\in T\}$.

We first state the main result of this paper. To this end, we recall that the {\it ratio $\sigma$-algebra} of a spectral representation $F= \indt f$ (of $\{X_t\}$) is defined as
\begin{equation}\label{e:rho}
 \rho(F) \equiv\rho\{ f_t,\ t\in T\} := \sigma\{ f_{t_1}/f_{t_2},\ t_1,t_2\in T\}.
\end{equation}
The following result characterizes the structure of all S$\alpha$S decompositions. 
\begin{theorem}\label{thm:1}
Suppose $\indt X$ is an S$\alpha$S process ($0<\alpha<2$) with spectral representation
\[
\indt X \eqd \bccbb{\int_Sf_t(s)M_\alpha(\d s)}_{t\in T}\,,
\]
with
$\indt f\subset L^\alpha(S,\calB_S,\mu)$. Let $\{X_t^{(k)}\}_{t\in T},\ k=1,\cdots,n$ be independent S$\alpha$S processes.  
\begin{itemize}
\item [(i)] The
decomposition
\equh\label{eq:decomposition1}
\indt X \eqd \ccbb{X\topp 1_t+\cdots + X\topp n_t}_{t\in T}
\eque
holds, if and only if there exist measurable functions
$r_k:S\to[-1,1]$, $k=1,\cdots,n$, such that
\equh\label{eq:Xk}
 \indt{X\topp k}\eqd\bccbb{\int_Sr_k(s)f_t(s)M_\alpha(\d s)}_{t\in T},\ \ k=1,\cdots,n.
\eque
In this case, necessarily $\summ k1n|r_k(s)|^\alpha = 1$, $\mu$-almost everywhere on $S$.
\item[(ii)] If \eqref{eq:decomposition1} holds, then the $r_k$'s in \eqref{eq:Xk} can be chosen
to be non-negative and $\rho(F)$-measurable. Such $r_k$'s are unique modulo $\mu$.
\end{itemize}
\end{theorem}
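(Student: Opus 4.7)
The plan is to prove~(i) by a direct characteristic-function calculation for the ``if'' direction and by appealing to the rigidity of minimal integral representations for the ``only if'' direction; part~(ii) will then follow from the same construction together with a short uniqueness argument at the minimal level.

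For the ``if'' direction of~(i), assume measurable $r_k:S\to[-1,1]$ exist with $\sum_k|r_k|^\alpha=1$ $\mu$-a.e. Independence of the $X\topp k$'s together with the formula~\eqref{eq:fdd} yields
\[
\esp\exp\bpp{-i\sum_j b_j\sum_{k=1}^n X\topp k_{t_j}} = \exp\bpp{-\int_S\bpp{\sum_{k=1}^n|r_k|^\alpha}\babs{\sum_j b_j f_{t_j}}^\alpha\d\mu}=\esp\exp\bpp{-i\sum_j b_j X_{t_j}},
\]
so~\eqref{eq:decomposition1} holds. Matching these two c.f.'s for all $\{b_j,t_j\}$ on the full-support space $S$ also proves the ``necessarily'' clause of~(i): whenever~\eqref{eq:decomposition1} holds, one must have $\sum_k|r_k|^\alpha=1$ $\mu$-a.e.

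For the ``only if'' direction of~(i), first pass to a \emph{minimal} spectral representation $\{\wt f_t\}$ on $(\wt S,\wt\mu)$ of $X$; every $\{f_t\}$ projects onto this minimal version via a measurable $\pi:S\to\wt S$ such that $\rho(F)=\pi\inv(\calB_{\wt S})$, so any $\wt r_k$ constructed on $\wt S$ pulls back to a $\rho(F)$-measurable $r_k$ on $S$. Each component $X\topp k$ has its own spectral representation $\{g\topp k_t\}\subset L^\alpha(S\topp k,\mu\topp k)$; by independence of the $X\topp k$'s, the direct-sum space $(\wb S,\wb\mu):=(\sqcup_k S\topp k,\sqcup_k\mu\topp k)$ with $g_t:=g\topp k_t$ on $S\topp k$ carries a (typically non-minimal) spectral representation of $X$. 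The Rosi\'nski--Hardin rigidity theorem for minimal representations then provides a measurable $\Phi:\wb S\to\wt S$ and $h:\wb S\to\mathbb R\setminus\{0\}$ with $g_t=h\cdot(\wt f_t\circ\Phi)$ $\wb\mu$-a.e.\ and $\Phi_*(|h|^\alpha\wb\mu)=\wt\mu$. Setting $\nu_k:=\Phi_*(|h|^\alpha\ind_{S\topp k}\wb\mu)$, one has $\nu_k\le\sum_j\nu_j=\wt\mu$, hence $\nu_k\ll\wt\mu$; defining $\wt r_k:=(\d\nu_k/\d\wt\mu)^{1/\alpha}$, a change of variables in the c.f.\ of $X\topp k$ confirms~\eqref{eq:Xk} on $\wt S$, and $\sum_k\nu_k=\wt\mu$ gives $\sum_k\wt r_k^\alpha=1$.

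For part~(ii), non-negativity is automatic because $r_k$ enters~\eqref{eq:fdd} only as a pointwise multiplier, so $|r_k|$ and $r_k$ yield the same law; $\rho(F)$-measurability is built into the construction above. For uniqueness, suppose $r_k$ and $r'_k$ are two non-negative $\rho(F)$-measurable solutions; they descend to $\wt r_k,\wt r'_k$ on $\wt S$ both giving representation~\eqref{eq:Xk} of $X\topp k$ against $\{\wt f_t\}$. Because $\{\wt f_t\}$ is minimal, the ratios $(\wt r_k\wt f_t)/(\wt r_k\wt f_{t'})=\wt f_t/\wt f_{t'}$ separate points on $\{\wt r_k>0\}$, so both $\{\wt r_k\wt f_t\}$ and $\{\wt r'_k\wt f_t\}$ are themselves minimal representations of $X\topp k$. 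A second application of Rosi\'nski--Hardin yields an identifying map between them that must preserve every ratio $\wt f_t/\wt f_{t'}$ and hence be the identity on the common support; the accompanying pushforward identity then forces $\wt r_k=\wt r'_k$ $\wt\mu$-a.e. The main obstacle is this minimal-representation rigidity step in the ``only if'' direction of~(i): verifying that the non-minimal direct-sum representation $\{g_t\}$ on $\wb S$ factors through the minimal $\{\wt f_t\}$ via a \emph{single} pair $(\Phi,h)$, and tracking how the partition $\wb S=\sqcup_k S\topp k$ interacts with the pushforward so that each $\nu_k$ is absolutely continuous with respect to $\wt\mu$.
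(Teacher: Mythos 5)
Your proposal is correct and follows essentially the same route as the paper: characteristic functions for the ``if'' direction, a direct-sum spectral representation of $X$ built from the components and mapped onto a minimal representation via Rosi\'nski's rigidity theorem (with $r_k$ obtained as $\alpha$-th roots of Radon--Nikodym derivatives of the pushed-forward restricted measures), and uniqueness via the fact that a point map identifying two minimal representations must preserve all ratios and hence be the identity. The only caveat is your side claim that matching characteristic functions alone forces $\sum_{k=1}^n|r_k|^\alpha=1$ for \emph{arbitrary} measurable $r_k$ satisfying \eqref{eq:Xk} --- without minimality this fails (e.g.\ for a single $t$ one only gets $\int|r_1|^\alpha\,\d\mu=\mu(S)$) --- but this is harmless since, exactly as in the paper, your construction yields the identity $\sum_k\nu_k=\wt\mu$ and hence the normalization for the canonical $\rho(F)$-measurable choice.
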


As an application, we study the structure of the {\em stationary} S$\alpha$S components of a stationary S$\alpha$S
 process. 
We obtain a characterization for all possible stationary components of stationary S$\alpha$S processes
in Theorem~\ref{thm:stationary} below.
As a simple example, consider the moving average process $\{X_t\}_{t\in\mathbb R^d}$ with spectral representation
\[
\{X_t\}_{t\in\mathbb R^d}\eqd \bccbb{\int_{\mathbb R^d}f(t+s)M_\alpha(\d s)}_{t\in \mathbb R^d},
\]
where $d\in\mathbb N$, $M_\alpha$ is an S$\alpha$S random measure on $\mathbb R^d$ with the Lebesgue control measure $\lambda$, and $f\in L^\alpha(\mathbb R^d,\calB_{\mathbb R^d},\lambda)$ (see, e.g.,~\cite{samorodnitsky94stable}). We show that such a process has only trivial stationary S$\alpha$S components, i.e.\
all its stationary components are rescaled versions of the original process (Corollary~\ref{coro:MMA1}). Such stationary 
S$\alpha$S processes will be called {\em indecomposable}. 
More examples are provided in Sections~\ref{sec:characterization} and~\ref{sec:stationary}.

We also develop parallel decomposability theory for max-stable
processes. Recently, Kabluchko~\cite{kabluchko09spectral} and Wang and Stoev~\cite{wang10association, wang10structure}
have established intrinsic connections between sum- and
max-stable processes. In particular, the tools in  \cite{wang10association} readily imply that the developed decomposition
theory for S$\alpha$S processes applies {\em mutatis mutandis} to max-stable processes. 

{ The rest of the paper is structured as follows.} In Section \ref{sec:characterization}, we provide some consequences of Theorem~\ref{thm:1} for general S$\alpha$S processes. The stationary case is discussed in Section \ref{sec:stationary}. Parallel results on max-stable processes are presented in Section \ref{sec:maxstable}. The proof of Theorem~\ref{thm:1} is given in Section~\ref{sec:proofs}.
%
%
%
%
%
\section{S$\alpha$S Components} \label{sec:characterization}
In this section, we provide a few examples to illustrate the consequences of our main result Theorem~\ref{thm:1}. The first one is about S$\alpha$S processes with independent increments. Recall that we always assume $0<\alpha<2$.

\begin{corollary}\label{c:ind-incr} Let $X=\{X_t\}_{t\in\mathbb R_+}$ be an arbitrary S$\alpha$S process with
independent increments and $X_0 = 0$.  Then all S$\alpha$S components of $X$ also have independent increments.
\end{corollary}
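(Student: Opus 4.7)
The plan is to exhibit a convenient spectral representation for $X$ in which the spectral functions are indicator functions of intervals starting at $0$, and then read off the independent-increment property of any component directly from the indicator structure together with Theorem~\ref{thm:1}.

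First, I would set up the representation. Since $X$ has independent increments and $X_0=0$, for each $t\ge 0$ the random variable $X_t$ is S$\alpha$S with some scale $\sigma_t\ge 0$. For $s<t$, $X_t=X_s+(X_t-X_s)$ with the two summands independent and S$\alpha$S, so $\sigma_t^\alpha-\sigma_s^\alpha$ equals the $\alpha$-scale of $X_t-X_s$; in particular $t\mapsto \sigma_t^\alpha$ is nondecreasing. Let $\mu$ be the Lebesgue--Stieltjes measure on $S:=\mathbb R_+$ determined by $\mu([0,t])=\sigma_t^\alpha$, and set $f_t(s)=\ind_{[0,t]}(s)$. A direct computation using \eqref{eq:fdd} shows that $\{\int f_t\,dM_\alpha\}_{t\in\mathbb R_+}$ has the same finite-dimensional characteristic functions as $X$ (both are determined by the map $t\mapsto \sigma_t^\alpha$ via the independent-increment property), so
\[
 \{X_t\}_{t\in\mathbb R_+}\eqd \Bigl\{\int_{[0,t]} M_\alpha(\d s)\Bigr\}_{t\in\mathbb R_+}.
\]
After restricting $S$ to $\supp(\mu)$, the full-support assumption is met.

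Next, I would invoke Theorem~\ref{thm:1}(i). For any S$\alpha$S component $X\topp k$ of $X$, there is a measurable $r_k:S\to[-1,1]$ with
\[
 \{X_t\topp k\}_{t\in\mathbb R_+}\eqd \Bigl\{\int_S r_k(s)\ind_{[0,t]}(s)\,M_\alpha(\d s)\Bigr\}_{t\in\mathbb R_+}
 =\Bigl\{\int_{[0,t]} r_k(s)\,M_\alpha(\d s)\Bigr\}_{t\in\mathbb R_+}.
\]
Finally, given $0=t_0<t_1<\cdots<t_n$, the joint law of the increments of $X\topp k$ equals the joint law of $\int_{(t_{i-1},t_i]} r_k(s)\,M_\alpha(\d s)$, $i=1,\dots,n$. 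Because $M_\alpha$ is independently scattered and the intervals $(t_{i-1},t_i]$ are pairwise disjoint, these stochastic integrals are independent S$\alpha$S random variables. Hence $X\topp k$ has independent increments.

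The substantive step is the first one — producing the indicator-function spectral representation of a general S$\alpha$S process with independent increments and verifying that \eqref{eq:fdd} matches. Everything after that is an immediate consequence of Theorem~\ref{thm:1} combined with the elementary disjoint-support property of S$\alpha$S random measures; this is the feature of Theorem~\ref{thm:1} that makes structural constraints on $X$ transfer automatically to its components.
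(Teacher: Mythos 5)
Your overall strategy coincides with the paper's: build the indicator representation $f_t=\ind_{[0,t]}$ from the scale function $m(t)=\sigma_t^\alpha$, apply Theorem~\ref{thm:1}(i), and read off independence of increments from the disjoint supports $r_k\ind_{(t_{i-1},t_i]}$. That last part of your argument is correct and is exactly what the paper does.

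There is, however, a genuine gap in your first step. You define $\mu$ as ``the Lebesgue--Stieltjes measure determined by $\mu([0,t])=\sigma_t^\alpha$'', but such a Borel measure exists only if $t\mapsto\sigma_t^\alpha$ is right-continuous: since $[0,t]=\bigcap_{s>t}[0,s]$, any Borel measure forces $\mu([0,t])=m(t+)$. An S$\alpha$S process with independent increments need not be continuous in probability, and $m$ can fail to be right-continuous. For instance, take $X_t=0$ for $t\le 1$ and $X_t=Z$ for $t>1$ with $Z$ S$\alpha$S; then $m(1)=0<m(1+)$, and no choice of $\mu$ makes $\{\int\ind_{[0,t]}\,\d M_\alpha\}$ match the finite-dimensional distributions of $X$ (with $f_t=\ind_{[0,t]}$ you would get $\|X_1\|_\alpha^\alpha=\mu(\{1\})+\mu([0,1))$, which cannot be $0$ while $\|X_{1+\epsilon}\|_\alpha^\alpha>0$ forces mass near $1$ from the right --- the point is that a single one-sided indicator convention cannot encode whether a jump of $m$ occurs \emph{at} $\tau$ or \emph{immediately after} $\tau$). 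The paper explicitly isolates this as the difficulty and resolves it by splitting $m$ into a right-continuous part $m_0$ and a pure-jump part $m_1$ carried on $\mathbb R_+\times\{0\}$ and $\mathbb R_+\times\{1\}$ respectively, using the spectral functions $\ind_{[0,t)\times\{0\}}+\ind_{[0,t]\times\{1\}}$; these are still indicators of increasing sets, so your concluding disjoint-support argument goes through verbatim. Your proof is complete only under the additional hypothesis that $m$ is right-continuous; to cover the general case you need something like the paper's two-component device.
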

\begin{proof} Write $m(t) = \nn{X_t}_\alpha^\alpha$, where $\|X_t\|_\alpha$ denotes the scale coefficient of
the S$\alpha$S random variable $X_t$. By the independence of the increments of $X$, it follows that $m$ is a 
non-decreasing function with $m(0) = 0$. First, we consider the simple case when $m(t)$ is right-continuous. 
Consider the Borel measure $\mu$ on $[0,\infty)$ determined by $\mu([0,t]):= m(t)$. The independence of the
increments of $X$ readily implies that $X$ has the representation:
\equh\label{eq:ind}
 \displaystyle{\{X_t\}_{t\in\mathbb R_+} \eqd {\Big\{} \int_0^\infty\ind_{[0,t]}(s) M_\alpha(\d s) {\Big\}}_{t\in\mathbb R_+}},
\eque
where $M_\alpha$ is an S$\alpha$S random measure with control measure $\mu$.

Now, for any S$\alpha$S component $Y (\equiv X^{(k)})$ of $X$, we have that \eqref{eq:Xk} holds
with $f_t(s) = \ind_{[0,t]}(s)$ and some function $r(s) (\equiv r_k(s))$.  This implies that the increments of $Y$ are also
independent since, for example, for any $0\le t_1<t_2 $, the spectral functions $r(s) f_{t_1}(s) =
r(s) \ind_{[0,t_1]}(s)$ and $r(s) f_{t_2}(s) - r(s) f_{t_1}(s) = r(s) \ind_{(t_1,t_2]}(s)$ have disjoint supports.

It remains to prove the general case. The difficulty is that $m(t)$ may have (at most countably many) discontinuities, and a representation as~\eqref{eq:ind} is not always possible. Nevertheless, introduce the right-continuous functions $t\mapsto m_i(t), i=0,1$,  
\[
m_0(t) \defe m(t+) - \sum_{\tau\leq t}(m(\tau)-m(\tau-))\ \ \mbox{ and }\ \
m_1(t) \defe \sum_{\tau\leq t}(m(\tau)-m(\tau-))
\]
and let $\wt M_\alpha$ be an S$\alpha$S random measure on $\mathbb R_+\times\{0,1\}$ with control 
measure $\mu([0,t]\times\{i\}) := m_i(t),\ i=0,1,\ t\in\bbR_+$. In this way, as in \eqref{eq:ind} one can show that
\[
\indt X \eqd \bccbb{\int_{\mathbb R_+\times\{0,1\}}\ind_{[0,t)\times\{0\}}(s,v) + \ind_{[0,t]\times\{1\}}(s,v) \wt M_\alpha(\d s,\d v)}_{t\in T}\,.
\]
The rest of the proof remains similar and is omitted.
\end{proof}
\begin{remark}
Theorem~\ref{thm:1} and Corollary~\ref{c:ind-incr} do not apply to the Gaussian case ($\alpha = 2$). For the sake of simplicity, take $T = \{1,2\}$ and $n=2$ (2 S$\alpha$S components) in~\eqref{eq:decomposition}. In this case, all the (in)dependence information of the mean-zero Gaussian process $\indt X$ is characterized by the covariance matrix $\Sigma$ of the Gaussian vector $(X_1\topp1, X_1\topp2, X_2\topp1,X_2\topp2)$. A counterexample can be easily constructed by choosing appropriately $\Sigma$.  This reflects the drastic difference of the geometries of $L^\alpha$ spaces for $\alpha<2$ and $\alpha = 2$. 
\end{remark}

The next natural question to ask is whether two S$\alpha$S processes have {\it common components}. Namely, the S$\alpha$S process $Z$ is a common component of the S$\alpha$S processes $X$ and $Y$, if $X\eqd Z+X\topp1$ and $Y\eqd Z+Y\topp1$, where ${X\topp1}$ and ${Y\topp1}$ are both S$\alpha$S processes independent of $Z$. 

To study the common components, the {\it co-spectral} point of view introduced in Wang and Stoev~\cite{wang10structure} is helpful.
Consider a {\it measurable} S$\alpha$S process $\indt X$ with spectral representation~\eqref{rep:integral}, where the index set $T$ is equipped with a measure $\lambda$ defined on the $\sigma$-algebra $\calB_T$. Without loss of generality, we take $f(\cdot,\cdot):(S\times T, \calB_S\times\calB_T)\to (\mathbb R,\calB_\mathbb R)$ to be jointly measurable (see Theorems 9.4.2 and 11.1.1 in~\cite{samorodnitsky94stable}). The {\it co-spectral functions}, $f_\cdot(s)\equiv f(s,\cdot)$, are elements of $L^0(T)\equiv L^0(T,\calB_T,\lambda)$, the space of $\calB_T$-measurable functions modulo $\lambda$-null sets. The co-spectral functions are indexed by $s\in S$, in contrast to the spectral functions $f_t(\cdot)$ indexed by $t\in T$.
Recall also that a set $\calP\subset L^0(T)$ is a {\it cone}, if $c\calP = \calP$ for all $c\in\mathbb R\setminus\{0\}$ and $\{\vv 0\}\in\calP$. We write $\{f_\cdot(s)\}_{s\in S}\subset\calP$ modulo $\mu$, if for $\mu$-almost all $s\in S$, $f_\cdot(s)\in\calP$.

\begin{proposition}\label{prop:common}
Let $X^{(i)}=\{X_t^{(i)}\}_{t\in T}$ be S$\alpha$S processes with measurable representations
$\{f_t^{(i)}\}_{t\in T}\subset L^\alpha(S_i,\calB_{S_i},\mu_i),\ i=1,2$.  If there exist two cones $\calP_i\subset L^0(T),i=1,2$, such that $\{f\topp i_\cdot(s)\}_{s\in S_i}\subset\calP_i$ modulo $\mu_i$, for $i = 1,2$, and $\calP_1\cap\calP_2 = \{\vv 0\}$, then the two processes have no common component.
\end{proposition}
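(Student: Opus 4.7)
My plan is to argue by contradiction. Suppose that $Z = \indt Z$ is a non-trivial S$\alpha$S common component of $X\topp 1$ and $X\topp 2$, so that for each $i = 1,2$ we can write $X\topp i \eqd Z + Y\topp i$ with $Y\topp i$ an S$\alpha$S process independent of $Z$. Applying Theorem~\ref{thm:1}(i) to each of these two decompositions separately yields measurable functions $r_i : S_i \to [-1,1]$ such that
\[
\indt Z \eqd \bccbb{\int_{S_i} r_i(s) f\topp i_t(s)\, M\topp i_\alpha(\d s)}_{t\in T}, \qquad i = 1,2,
\]
where $M\topp i_\alpha$ is the S$\alpha$S random measure with control measure $\mu_i$ driving $X\topp i$. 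Non-triviality of $Z$ means $r_i \not\equiv 0$ modulo $\mu_i$ for each $i$.

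The next step is to read off the co-spectral functions. The representation on $(S_i, \mu_i)$ displayed above has co-spectral functions $s \mapsto r_i(s) f\topp i_\cdot(s) \in L^0(T)$. By hypothesis $f\topp i_\cdot(s) \in \calP_i$ for $\mu_i$-a.e.\ $s$, and since $\calP_i$ is a cone containing $\vv 0$, the scaled function $r_i(s) f\topp i_\cdot(s)$ also lies in $\calP_i$ for $\mu_i$-a.e.\ $s$. Thus $Z$ admits one measurable representation with co-spectral functions in $\calP_1$ \emph{and} another with co-spectral functions in $\calP_2$.

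Finally, I would invoke the invariance of the co-spectral cone under change of representation, developed in Wang and Stoev~\cite{wang10structure}. By the Hardin--Rosi\'nski uniqueness theorem for minimal representations, any two measurable representations of the same S$\alpha$S process are related, via comparison with a common minimal representation $\{g_t\}_{t \in T}$ on some $(S^*, \mu^*)$, by a measurable change of variable $\Phi_i$ (essentially surjective onto $S^*$) together with a nonzero scalar multiplier. Consequently, the co-spectral functions of the minimal representation satisfy $g_\cdot(s^*) \in \calP_i$ for $\mu^*$-a.e.\ $s^*$, for each $i = 1,2$, so $g_\cdot(s^*) \in \calP_1 \cap \calP_2 = \{\vv 0\}$ for $\mu^*$-a.e.\ $s^*$. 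This forces all spectral functions $g_t$ of the minimal representation to vanish, whence $Z \equiv 0$, contradicting non-triviality. The chief obstacle is this last step: carefully transferring a co-spectral property from an arbitrary representation to the minimal one and showing the change-of-variables map essentially covers $S^*$; this is exactly the content of the co-spectral machinery in~\cite{wang10structure} and is what makes the use of cones, rather than only individual spectral functions, the right invariant to track.
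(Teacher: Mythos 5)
Your proposal is correct and follows essentially the same route as the paper: apply Theorem~\ref{thm:1}(i) to each decomposition to get representations $\{r_i f^{(i)}_t\}_{t\in T}$ of $Z$, observe that the corresponding co-spectral functions lie in $\calP_1$ and $\calP_2$ respectively, and conclude from $\calP_1\cap\calP_2=\{\vv 0\}$ that $Z$ is degenerate. The only difference is that you make explicit (via a common minimal representation and the Hardin--Rosi\'nski change of variables) the representation-invariance of the co-spectral cone, a step the paper leaves implicit by appealing to the machinery of~\cite{wang10structure}.
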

\begin{proof}
Suppose $Z$ is a component of $X\topp 1$. Then, by Theorem~\ref{thm:1}, $Z$ has a spectral representation $\indt{r\topp1f\topp1}$, for some $\calB_{S_1}$-measurable function $r\topp1$.
By the definition of cones, the co-spectral functions of $Z$ are included in $\calP_1$, i.e., $\{r\topp1(s)f_\cdot\topp1(s)\}_{s\in S_1}\subset\calP_1$ modulo $\mu_1$. If $Z$ is also a component of $X\topp 2$, then by the same argument, $\{r\topp2(s)f_\cdot\topp2(s)\}_{s\in S_2}\subset\calP_2$ modulo $\mu_2$, for some $\calB_{S_2}$-measurable function $r\topp2(s)$. Since $\calP_1\cap\calP_2 = \{\vv 0\}$, it then follows that $\mu_i(\supp(r\topp i)) = 0, i=1,2$, or equivalently $Z = 0$, the degenerate case.
\end{proof}

We conclude this section with an application to S$\alpha$S moving averages.

\begin{corollary}\label{coro:MAequal}
Let $X\topp1$ and $X\topp2$ be two S$\alpha$S moving averages
\[
\{X\topp i_t\}_{t\in\bbR^d}\eqd \bccbb{\int_{\bbR^d}f\topp i(t+s)M_\alpha\topp i(\d s)}_{t\in \bbR^d}
\]
 with kernel functions  $f\topp i\in L^\alpha(\mathbb R^d,\calB_{\mathbb R^d},\lambda), i=1,2$. Then, either
\equh\label{eq:MAequal}
X\topp1\eqd cX\topp2\mbox{ for some } c>0\,,
\eque
or $X\topp1$ and $X \topp2$ have no common component. Moreover,~\eqref{eq:MAequal} holds, if and only if for some $\tau\in\mathbb R^d$ and $\epsilon\in\{\pm 1\}$,
\equh\label{eq:MAequal1}
f\topp1(s) = \epsilon cf\topp2(s+\tau)\,, \mu\mbox{-almost all } s\in S.
\eque
\end{corollary}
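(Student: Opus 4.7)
The strategy is to invoke Proposition~\ref{prop:common} with cones built from translates of the moving average kernels. Any moving average $X\topp i$ has co-spectral functions
\[
f\topp i_\cdot(s) = f\topp i(\cdot+s) \in L^0(\bbR^d),\quad s\in\bbR^d,
\]
all of which lie in the cone
\[
\calP_i \defe \ccbb{c\, f\topp i(\cdot + \tau) : c\in\bbR,\ \tau\in\bbR^d} \subset L^0(\bbR^d),\quad i=1,2,
\]
which is closed under nonzero scalar multiplication and contains $\vv 0$.

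If $\calP_1\cap\calP_2=\{\vv 0\}$, Proposition~\ref{prop:common} yields that $X\topp 1$ and $X\topp 2$ share no common component. Otherwise, pick a nonzero $g\in\calP_1\cap\calP_2$, representable as $g = c_1 f\topp 1(\cdot+\tau_1) = c_2 f\topp 2(\cdot+\tau_2)$ Lebesgue-a.e., with $c_1,c_2\neq 0$ (forced by $g\neq 0$ and the full support of $f\topp i$). A change of variable then gives
\equh\label{e:MAfunc}
f\topp 1(s) = \gamma\, f\topp 2(s+\tau)\quad\text{Lebesgue-a.e.,}
\eque
with $\gamma\defe c_2/c_1\neq 0$ and $\tau\defe \tau_2-\tau_1$. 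Plugging~\eqref{e:MAfunc} into the characteristic-function formula~\eqref{eq:fdd} and using translation invariance of Lebesgue measure together with the evenness of $|\cdot|^\alpha$ yields $X\topp 1\eqd|\gamma|\,X\topp 2$. This establishes the dichotomy~\eqref{eq:MAequal} with $c\defe|\gamma|$, as well as the ``if'' direction of the moreover statement (with $\epsilon=\sign(\gamma)\in\{\pm1\}$).

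For the ``only if'' direction, suppose $X\topp 1\eqd c\,X\topp 2$ for some $c>0$. For any integer $n\geq c^\alpha$, let $W_1,W_2,W_3$ be mutually independent copies of $X\topp 1$. The S$\alpha$S stability property produces the decompositions
\[
X\topp 1\eqd n^{-1/\alpha}W_1 + \bbpp{1-n^{-1}}^{1/\alpha}W_2\qmand X\topp 2\eqd n^{-1/\alpha}W_1 + \bbpp{c^{-\alpha}-n^{-1}}^{1/\alpha}W_3,
\]
so $Z\defe n^{-1/\alpha}W_1$ is a common S$\alpha$S component of $X\topp 1$ and $X\topp 2$. The contrapositive of Proposition~\ref{prop:common} therefore excludes the first case $\calP_1\cap\calP_2=\{\vv 0\}$, hence~\eqref{e:MAfunc} holds for some $\gamma\neq 0$. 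Matching $\alpha$-scale parameters, $|\gamma|\,\|X_0\topp 2\|_\alpha = \|X_0\topp 1\|_\alpha = c\,\|X_0\topp 2\|_\alpha$, forces $|\gamma|=c$, and hence $\gamma=\epsilon c$ with $\epsilon\in\{\pm 1\}$.

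The principal technical point is the common-component construction in the ``only if'' step: one must verify via the stability property that a suitably scaled copy of $X\topp 1$ arises as a component of both processes simultaneously, which in turn invokes the contrapositive of Proposition~\ref{prop:common} to force~\eqref{e:MAfunc}. Everything else reduces to routine change-of-variable computations and scale matching.
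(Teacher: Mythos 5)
Your proof is correct, and the first half (the dichotomy) is essentially the paper's argument: you take the smallest cones $\calP_i$ generated by the translates of $f\topp i$ and apply Proposition~\ref{prop:common} when they meet only in $\vv 0$, while a nonzero element of $\calP_1\cap\calP_2$ yields $f\topp1(\cdot)=\gamma f\topp2(\cdot+\tau)$ and hence $X\topp1\eqd|\gamma|X\topp2$ by translation invariance of Lebesgue measure. Where you genuinely diverge is the ``only if'' direction of the moreover statement. The paper disposes of this step by citing Corollary~4.2 of Wang and Stoev (2010) ``with slight modification,'' i.e.\ it imports an external rigidity result for moving-average kernels. You instead make the argument self-contained: from $X\topp1\eqd cX\topp2$ and the stability property you exhibit an explicit nonzero common S$\alpha$S component $Z=n^{-1/\alpha}W_1$ (with $n\ge c^\alpha$ so that both residual scale coefficients $1-n^{-1}$ and $c^{-\alpha}-n^{-1}$ are nonnegative), and then run Proposition~\ref{prop:common} in the contrapositive to force $\calP_1\cap\calP_2\neq\{\vv 0\}$, after which scale matching pins down $|\gamma|=c$. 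This buys independence from the external reference at the cost of a small amount of bookkeeping; the paper's route is shorter on the page but leans on a result proved elsewhere for positive cones. Two minor points you should make explicit: the scale-matching step $|\gamma|\,\|X_0\topp2\|_\alpha=c\,\|X_0\topp2\|_\alpha$ needs $f\topp2$ not almost everywhere zero (which holds unless both processes are degenerate, in which case the statement is vacuous), and the conclusion of Proposition~\ref{prop:common} should be read as ``no nonzero common component,'' which is exactly what its proof establishes and what your contrapositive requires.
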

\begin{proof}
Clearly~\eqref{eq:MAequal1} implies~\eqref{eq:MAequal}. Conversely, if~\eqref{eq:MAequal} holds, then~\eqref{eq:MAequal1} follows as in the proof of Corollary 4.2 in~\cite{wang10structure}, with slight modification (the proof therein was for {\it positive} cones). When~\eqref{eq:MAequal} (or equivalently~\eqref{eq:MAequal1}) does not hold, consider the smallest cones containing $\{f\topp i(s+\cdot)\}_{s\in\mathbb R}, i =1,2$ respectively. Since these two cones have trivial intersection $\{\vv 0\}$, Proposition~\ref{prop:common} implies that $X\topp1$ and $X\topp 2$ have no common component.
\end{proof}


\section{Stationary S$\alpha$S Components and Flows}\label{sec:stationary}

Let $X =\{X_t\}_{t\in T}$ be a stationary S$\alpha$S process with representation \eqref{rep:integral},
where now $T =\bbR^d$ or $T=\bbZ^d$, $d\in\mathbb N$.
The seminal work of Ros\'nski~\cite{rosinski95structure} established an important connection between stationary S$\alpha$S processes and {\it flows}.
A family of functions $\indt\phi$ is said to be a flow on $(S,\calB_S,\mu)$, if for all $t_1,t_2\in T$, $\phi_{t_1+t_2}(s) = \phi_{t_1}(\phi_{t_2}(s))$ for all $s\in S$, and $\phi_0(s) = s$ for all $s\in S$. We say that a flow is {\it non-singular}, if $\mu(\phi_t(A)) = 0$ is equivalent to
$\mu(A) = 0$, for all $A\in\calB_S, t\in T$. Given a flow $\indt\phi$, $\indt c$ is said to be a {\it cocycle} if $c_{t+\tau}(s) = c_t(s)c_\tau\circ\phi_t(s)$ $\mu$-almost surely for all $t,\tau\in T$ and $c_t\in\{\pm1\}$ for all $t\in T$.

To understand the relation between the structure of stationary S$\alpha$S processes and flows, it is necessary to work with {\em minimal}
representations of S$\alpha$S processes, introduced by Hardin~\cite{hardin81isometries,hardin82spectral}. 
The minimality assumption is crucial in many results on the structure of
S$\alpha$S processes, although it is in general difficult to check (see e.g.~Rosi\'nski~\cite{rosinski06minimal} and 
Pipiras~\cite{pipiras07nonminimal}). 

\begin{definition} \label{d:minimal} The spectral functions $F \equiv \indt f$ (and the corresponding spectral representation~\eqref{rep:integral}) are
said to be minimal, if the ratio $\sigma$-algebra $\rho(F)$ in \eqref{e:rho}
is equivalent to $\calB_S$, i.e., for all $A\in \calB_S$, there exists $B \in \rho(F)$ such that
$\mu(A\Delta B) = 0,$ where $A\Delta B = (A\setminus B) \cup (B\setminus A)$.
\end{definition}

Rosi\'nski (\cite{rosinski95structure}, Theorem 3.1) proved that if $\indt f$ is minimal, then there exists a modulo
$\mu$ unique non-singular flow $\indt\phi$, and a corresponding cocycle
$\indt c$, such that for all $t\in T$,
\equh\label{eq:flow}
f_t(s) = c_t(s)\bpp{\ddfrac\mu{\phi_t}\mu(s)}^{1/\alpha}f_0\circ\phi_t(s)\,, \mu\mbox{-almost everywhere.}
\eque

Conversely, suppose that \eqref{eq:flow} holds for some non-singular flow $\indt \phi$, a corresponding cocycle $\indt c$, and a function
$f_0\in L^\alpha(S,\mu)$ ($\indt f$ not necessarily minimal).  Then, clearly the S$\alpha$S process $X$ in \eqref{rep:integral} is stationary.  In this case, we
shall say that $X$ is generated by the flow $\indt \phi$.

Consider now an S$\alpha$S decomposition \eqref{eq:decomposition} of $X$, where the independent components $\indt {X\topp k}$'s
are {\em stationary}.  This will be referred to as a {\em stationary S$\alpha$S decomposition}, and the $\indt {X\topp k}$'s
as {\em stationary components} of $X$.  Our goal in this section is to characterize the structure of all possible stationary components.
This characterization involves the invariant $\sigma$-algebra with respect to the flow $\indt\phi$:
\begin{equation}\label{e:F-phi}
\filF_\phi = \{A\in\calB_S:\mu(\phi_\tau(A) \Delta A) =0\,,\mfa \tau\in T\}\,.
\end{equation}
Given a function $g$ and a $\sigma$-algebra $\calG$, we write $g\in\calG$, if $g$ is measurable with respect to $\calG$. 
\begin{theorem}\label{thm:stationary} Let $\indt X$ be a stationary and measurable
S$\alpha$S process with spectral functions $\indt f$ given by 
\[
f_t(s) = \int_S c_t(s)\bpp{\ddfrac\mu{\phi_t}\mu(s)}^{1/\alpha}f_0\circ\phi_t(s)M_\alpha(\d s), t\in T\,.
\]
{\it (i)} Suppose that $\indt X$ has a stationary S$\alpha$S decomposition
\equh\label{eq:decomposition_stat}
\indt X \eqd \ccbb{X\topp 1_t+\cdots + X\topp n_t}_{t\in T}.
\eque

Then, each component $\indt {X\topp k}$ has a representation
\equh\label{eq:Xk_stat}
 \indt{X\topp k}\eqd\bccbb{\int_Sr_k(s)f_t(s)M_\alpha(\d s)}_{t\in T},\ \ k=1,\cdots,n,
\eque
where the 
$r_k$'s can be chosen to be non-negative and $\rho(F)$-measurable.  This choice is unique modulo 
$\mu$ and these $r_k$'s are $\phi$-invariant, i.e.\ $r_k\in\filF_\phi$. 

\noindent{\it (ii)} Conversely, for any $\phi$-invariant $r_k$'s such that $\summ k1n|r_k(s)|^\alpha = 1$, $\mu$-almost everywhere on $S$, decomposition \eqref{eq:decomposition_stat} holds with $X^{(k)}$'s as in
\eqref{eq:Xk_stat}.
\end{theorem}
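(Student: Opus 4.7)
The plan is to deduce part (i) from Theorem~\ref{thm:1} together with its uniqueness clause, by recognizing $r_k\circ\phi_{-\tau}$ as a second admissible coefficient for the same component $X^{(k)}$; part (ii) is then a direct substitution into the flow representation.

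For part (i), Theorem~\ref{thm:1} already produces non-negative, $\rho(F)$-measurable $r_k$'s, unique modulo $\mu$, giving \eqref{eq:Xk_stat}; only the $\phi$-invariance is new. Fix $\tau\in T$ and set $D_\tau\defe\ddfrac{\mu}{\phi_\tau}{\mu}$. The cocycle identity $c_{t+\tau}(s)=c_\tau(s)\,c_t(\phi_\tau(s))$ and the chain rule $D_{t+\tau}=D_\tau\cdot(D_t\circ\phi_\tau)$ yield
\[
f_{t+\tau}(s)=c_\tau(s)\,D_\tau(s)^{1/\alpha}\,f_t(\phi_\tau(s)),
\]
and hence
\[
r_k(s)\,f_{t+\tau}(s)=c_\tau(s)\,D_\tau(s)^{1/\alpha}\,\bigl[(r_k\circ\phi_{-\tau})\cdot f_t\bigr]\circ\phi_\tau(s).
\]
The standard change-of-variables rule for S$\alpha$S integrals under the non-singular map $\phi_\tau$ then shows that the process $\{X^{(k)}_{t+\tau}\}_{t\in T}$ admits the equivalent spectral representation $(r_k\circ\phi_{-\tau})\,f_t$ on the same control space $(S,\mu)$. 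A short check using $(f_{t_1}/f_{t_2})\circ\phi_\tau=f_{t_1+\tau}/f_{t_2+\tau}$ shows that both $\phi_\tau$ and $\phi_{-\tau}$ preserve $\rho(F)$, so $r_k\circ\phi_{-\tau}$ remains $\rho(F)$-measurable. Since $\{X^{(k)}_{t+\tau}\}_{t\in T}\eqd\{X^{(k)}_t\}_{t\in T}$ by stationarity, both $r_k$ and $r_k\circ\phi_{-\tau}$ are admissible non-negative, $\rho(F)$-measurable coefficients representing the component $X^{(k)}$ of $X$. The uniqueness clause of Theorem~\ref{thm:1}(ii) then forces $r_k=r_k\circ\phi_{-\tau}$ modulo $\mu$; as $\tau\in T$ was arbitrary, $r_k\in\filF_\phi$.

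For part (ii), with $\phi$-invariant $r_k$'s satisfying $\sum_k|r_k(s)|^\alpha=1$ $\mu$-a.e., the decomposition \eqref{eq:decomposition_stat} with $X^{(k)}$'s as in \eqref{eq:Xk_stat} follows directly from Theorem~\ref{thm:1}(i). Stationarity of each $X^{(k)}$ is then immediate from $r_k=r_k\circ\phi_t$, since
\[
r_k(s)\,f_t(s)=c_t(s)\,D_t(s)^{1/\alpha}\,(r_k f_0)(\phi_t(s))
\]
displays $\{r_k f_t\}_{t\in T}$ as a flow-generated spectral family with the same flow $\phi$ and cocycle $c$ but with the new base function $r_k f_0\in L^\alpha(S,\mu)$, so Rosi\'nski's converse statement applies.

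The main obstacle, I expect, is the bookkeeping in part (i): one must carefully verify that rewriting $r_k\,f_{t+\tau}$ as a $\phi_\tau$-pullback of $(r_k\circ\phi_{-\tau})\,f_t$ yields a genuinely distributionally equivalent S$\alpha$S process on $(S,\mu)$ (which requires the $|c_\tau|=1$ cocycle and the $D_\tau$-Jacobian to be handled correctly), and that the transported coefficient $r_k\circ\phi_{-\tau}$ remains in the uniqueness class of Theorem~\ref{thm:1}(ii). Once this is in place, both parts of the theorem follow essentially by uniqueness and by substitution into the flow identity.
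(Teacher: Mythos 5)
Your proposal is correct and follows essentially the same route as the paper: part (i) is obtained by showing, via the flow identity and a change of variables, that $r_k\circ\phi_{-\tau}$ is another admissible non-negative $\rho(F)$-measurable coefficient for $X^{(k)}$ (using that $\phi_\tau^{-1}(\rho(F))\sim\rho(F)$) and then invoking the uniqueness clause of Theorem~\ref{thm:1}, while part (ii) is the same direct verification of stationarity from the $\phi$-invariance of the $r_k$'s.
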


\begin{proof} By using \eqref{eq:flow}, a change of variables, and the $\phi$-invariance of 
the functions $r_k$'s, one can show that the $X^{(k)}$'s in \eqref{eq:Xk_stat} are stationary. This fact and 
Theorem \ref{thm:1} yield part {\em (ii)}.

We now show {\em (i)}. 
Suppose that $X^{(k)}$ is a stationary (S$\alpha$S) component of $X$.  Theorem \ref{thm:1} implies that there 
exists unique modulo $\mu$ non-negative and $\rho(F)$-measurable function $r_k$ for which \eqref{eq:Xk_stat} holds.  
By the stationarity of $X^{(k)}$, it also follows that for all $\tau \in T$,  $\{ r_k(s) f_{t+\tau}(s)\}_{t\in T}$ is also a 
spectral representation of $X^{(k)}$. By the flow representation \eqref{eq:flow}, it follows that for all $t,\tau\in T$,
\equh\label{eq:flow1}
f_{t+\tau}(s) = c_\tau(s)f_t\circ\phi_\tau(s)\bpp{\ddfrac\mu{\phi_\tau}\mu}^{1/\alpha}(s)\,,\mbox{ $\mu$-almost everywhere,}
\eque
and we obtain that for all $\tau, t_j\in T, a_j\in \mathbb R,\ j=1,\cdots,n$:
\[
\int_S\babs{\summ j1na_jr_k(s)f_{t_j+\tau}(s)}^\alpha\mu(\d s) 
= \int_S {\Big|}\sum_{j=1}^n a_j r_k\circ\phi_{-\tau}(s) f_{t_j}(s) {\Big|} ^\alpha \mu(\d s),\nonumber
\]
which shows that $\{r_k \circ\phi_{-\tau}(s) f_t(s)\}_{t\in T}$ is also a representation for $X^{(k)}$, for all $\tau \in T$.

Observe that from~\eqref{eq:flow1}, for all $t_1,t_2,\tau\in T$ and $\lambda\in\mathbb R$,
\[
\bccbb{\frac{f_{t_1+\tau}}{f_{t_2+\tau}}\leq \lambda} = \phi_\tau\inv\bccbb{\frac{f_{t_1}}{f_{t_2}}\leq \lambda}\mbox{ modulo }\mu.
\]
It then follows that for all $\tau\in T$, the $\sigma$-algebra
$\phi_{-\tau} (\rho(F))\equiv (\phi_\tau)^{-1} (\rho(F))$ is equivalent to $\rho(F)$. This, by the uniqueness of $r_k\in\rho(F)$ (Theorem \ref{thm:1}),
implies that $r_k\circ \phi_\tau = r_k$ modulo $\mu$, for all $\tau$. Then, $r_k\in {\cal F}_\phi$ follows from standard measure-theoretic argument. The proof is complete.
\end{proof}

\begin{remark}
The structure of the {\em stationary} S$\alpha$S components of stationary S$\alpha$S processes (including 
random fields) has attracted much interest since the seminal work of 
Rosi\'nski~\cite{rosinski95structure, rosinski00decomposition}. See, for example, 
Pipiras and Taqqu~\cite{pipiras:taqqu:2004cy}, Samorodnitsky~\cite{samorodnitsky05null}, 
Roy~\cite{roy07ergodic,roy09poisson}, Roy and Samorodnitsky~\cite{roy08stationary}, 
Roy~\cite{roy10ergodic,roy10nonsingular}, and Wang et al.~\cite{wang09ergodic}. 
In view of Theorem~\ref{thm:stationary}, the components considered in these works correspond to indicator functions
$r_k(s) = \ind_{A_k}(s)$ of certain disjoint flow-invariant sets $A_k$'s arising from ergodic theory 
(see e.g.~Krengel~\cite{krengel85ergodic} and Aaronson~\cite{aaronson97introduction}).
\end{remark}

\comment{\begin{remark} Suppose the representation in \eqref{rep:integral} is minimal and consider the
decomposition \eqref{eq:decomposition}, where $X^{(k)}_t := \int_{A_k} f_t(s) M_\alpha(\d s)$ for disjoint 
measurable $A_k$'s with $S = \cup_{k=1}^n A_k$. As in the proof of Corollary \ref{coro:stationaryMinimal}
one can show that the components $X^{(k)} = \{X_t^{(k)}\}_{t\in T},\ k=1,\cdots,n$ are all essentially 
different and non-trivial. 
\end{remark}
}

Theorem \ref{thm:stationary} can be applied to check {\it indecomposability} of stationary  S$\alpha$S processes. 
Recall that a stationary S$\alpha$S process is said to be {\em indecomposable}, if all its stationary S$\alpha$S
components are trivial (i.e.\ constant multiples of the original process). 
\begin{corollary}\label{coro:indecomposable}
Consider $\indt X$ as in Theorem~\ref{thm:stationary}. If $\filF_\phi$ is trivial, then $\indt X$ is indecomposable. The converse is true when, in addition, $\indt f$ is minimal.
\end{corollary}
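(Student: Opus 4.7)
The plan is to derive both implications from Theorem~\ref{thm:stationary} combined with the uniqueness assertion in Theorem~\ref{thm:1}(ii); no further machinery should be required.

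For the forward implication, I would pick an arbitrary stationary S$\alpha$S component $X^{(k)}$ of $X$ and apply Theorem~\ref{thm:stationary}(i) to obtain a non-negative, $\rho(F)$-measurable, $\phi$-invariant representative $r_k$ giving \eqref{eq:Xk_stat}. Because $\sum_j r_j^\alpha=1$ $\mu$-a.e., the function $r_k$ takes values in $[0,1]$; being measurable with respect to the trivial $\sigma$-algebra $\filF_\phi$, a short level-set argument applied to the sets $\{r_k\leq c\}\in\filF_\phi$ shows that $r_k$ equals some constant $c_k\in[0,1]$ $\mu$-a.e. Consequently $X^{(k)}\eqd c_k X$, so every stationary component of $X$ is trivial.

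For the converse I would argue by contrapositive: assume $\indt f$ is minimal and that $\filF_\phi$ is non-trivial, so that there exists $A\in\filF_\phi$ with $\mu(A)>0$ and $\mu(S\setminus A)>0$. Setting $r_1:=\ind_A$ and $r_2:=\ind_{S\setminus A}$ yields non-negative $\phi$-invariant functions with $r_1^\alpha+r_2^\alpha=1$, so Theorem~\ref{thm:stationary}(ii) produces a stationary decomposition $X\eqd X^{(1)}+X^{(2)}$ via \eqref{eq:Xk_stat}. To conclude that this decomposition is non-trivial, I would suppose for contradiction that $X^{(1)}\eqd cX$ for some $c\geq 0$; then the constant function $c$ is also a non-negative representative for $X^{(1)}$ in the sense of \eqref{eq:Xk}, and since minimality gives $\rho(F)=\calB_S$, the uniqueness clause of Theorem~\ref{thm:1}(ii) forces $\ind_A=c$ $\mu$-a.e., contradicting $\mu(A),\mu(S\setminus A)>0$.

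The main difficulty is this converse direction: it hinges on distinguishing $\ind_A$ from any constant \emph{among non-negative $\rho(F)$-measurable representatives}, which is exactly what Theorem~\ref{thm:1}(ii) delivers once $\rho(F)=\calB_S$. Without the minimality assumption $\rho(F)$ could be strictly coarser than $\calB_S$, so that $\ind_A$ might well agree with a constant modulo $\rho(F)$ even when it does not agree modulo $\calB_S$; thus minimality is genuinely needed for the converse rather than a convenient technical hypothesis.
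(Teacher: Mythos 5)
Your proposal is correct and follows essentially the same route as the paper: the forward direction unpacks the observation that a $\phi$-invariant, hence $\filF_\phi$-measurable, $r_k$ must be constant when $\filF_\phi$ is trivial, and the converse builds the component with $r=\ind_A$ from a non-trivial invariant set $A$ and rules out triviality via the uniqueness of the non-negative $\rho(F)$-measurable representative under minimality (the paper's ``$1=c\ind_A$'' contradiction is your ``$\ind_A=c$'' up to normalization). The only difference is presentational: you phrase the converse as a contrapositive while the paper argues by contradiction from indecomposability.
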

\begin{proof}
If $\filF_\phi$ is trivial, the result follows from Theorem~\ref{thm:stationary}. 
Conversely, let $\indt f$ be minimal and $X$ indecomposable. Then, one can choose $A\in\filF_\phi$, such that $\mu(A)>0$ and $\mu(S\setminus A)>0$. Then, consider
\[
\indt {X^A}\eqd \bccbb{\int_S\ind_A(s)f_t(s)M_\alpha(\d s)}_{t\in T}.
\]
By Theorem~\ref{thm:stationary}, $X^A$ is a stationary component of $X$.  It suffices to show that $X^A$ is a non-trivial of $X$, which would contradict the indecomposability. 

Suppose that $X^A$ is trivial, then $ cX^A \eqd X$, for some $c>0$.  Thus, by
Theorem \ref{thm:stationary}, $cX^{A}$ has a representation as in \eqref{eq:Xk_stat}, with $r_k:= c\ind_A$.  On the other hand,
since $c X^A \eqd X$, we also have the trivial representation with $r_k:= 1$.  Since $A \in \rho(F)$,
the uniqueness of $r_k$ implies that $1 = c\ind_A$ modulo $\mu$, which contradicts $\mu(A^c)>0$. Therefore, $X^A$ is non-trivial.
\end{proof}
The indecomposable stationary S$\alpha$S
processes can be seen as the elementary building blocks for the construction of
general stationary S$\alpha$S processes. 
We conclude this section with two examples.

\begin{Example}[Mixed moving averages]  \label{example:MMA}
Consider a {\em mixed moving average} in the sense of \cite{surgailis93stable}:
\equh\label{eq:MMA}
\{X_t\}_{t\in \bbR^d}\eqd {\Big\{} \int_{\bbR^d \times V} f(t+s,v) M_\alpha(\d s, \d v) {\Big\}}_{t\in\bbR^d}.
\eque
Here, $M_\alpha$ is an S$\alpha$S random measure on $\bbR^d\times V$ with the control measure $\lambda\times\nu$, where $\lambda$ is the Lebesgue measure on $(\mathbb R^d,\calB_{\mathbb R^d})$ and $\nu$ is a probability measure on $(V,\calB_V)$, and $f(s,v) \in L^\alpha (\bbR^d\times V,\calB_{\bbR^d\times V},\lambda\times\nu)$.
Given a disjoint union $V = \bigcup_{j=1}^nA_j$, where $A_j$'s are measurable subsets of $V$, the mixed moving averages can clearly be decomposed as in~\eqref{eq:decomposition_stat} with
\[
\{X\topp k_t\}_{t\in \mathbb R^d}\eqd \bccbb{\int_{\mathbb R^d\times A_k}f(t+s,v)M_\alpha(\d s,\d v)}_{t\in\mathbb R^d}\,, \mfa k = 1,\dots,n\,.
\]
\end{Example}
Any moving average process
\equh\label{eq:MA}
\{X_t\}_{t\in\bbR^d} \eqd \bccbb{\int_{\bbR^d}f(t+s)M_\alpha(\d s)}_{t\in\bbR^d}
\eque 
trivially has a mixed moving average representation.  
The next result shows when the converse is true.

\begin{corollary}\label{coro:MMA1}
The mixed moving average $X$ in~\eqref{eq:MMA} is indecomposable, if and only if it has a moving
average representation as in~\eqref{eq:MA}.
\end{corollary}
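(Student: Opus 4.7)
The ``if'' direction is straightforward via Corollary~\ref{coro:indecomposable}: the moving average \eqref{eq:MA} is generated by the translation flow $\phi_t(s)=s+t$ on $(\mathbb R^d,\calB_{\mathbb R^d},\lambda)$ with trivial cocycle, and any measurable set that is translation-invariant modulo Lebesgue null sets is itself null or conull (its indicator is a.e.\ constant, since it is a.e.\ translation-invariant). Hence $\filF_\phi$ is trivial and indecomposability follows.

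For the ``only if'' direction, assume $X$ in \eqref{eq:MMA} is indecomposable. Its natural representation $f_t(s,v)=f(t+s,v)$ is a flow representation \eqref{eq:flow} with $\phi_t(s,v)=(s+t,v)$, trivial cocycle and trivial Jacobian. The flow-invariant $\sigma$-algebra $\filF_\phi$ therefore consists (modulo $\lambda\times\nu$) of sets of the form $\mathbb R^d\times B$ with $B\in\calB_V$. For every such $B$, $r_B(s,v):=\ind_B(v)$ is $\phi$-invariant, so Theorem~\ref{thm:stationary}(ii) yields a stationary component
\[
X^B_t := \int_{\mathbb R^d\times V}\ind_B(v)\,f(t+s,v)\,M_\alpha(\d s,\d v),
\]
and indecomposability forces $X^B\eqd c_B X$ for some $c_B\in[0,1]$.

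Matching characteristic functions through \eqref{eq:fdd} gives, for all $a_j\in\mathbb R$, $t_j\in\mathbb R^d$,
\[
\int_B H_{a,t}(v)\,\nu(\d v)\,=\,c_B^\alpha\int_V H_{a,t}(v)\,\nu(\d v),\qquad H_{a,t}(v):=\int_{\mathbb R^d}\Big|\sum_j a_j f(t_j+s,v)\Big|^\alpha\lambda(\d s).
\]
Since $c_B$ is independent of $(a,t)$, all the functions $H_{a,t}$ are proportional in $v$, so $H_{a,t}(v)=\psi(a,t)\chi(v)$ for suitable $\chi,\psi\ge 0$; discarding the $\nu$-null set $\{\chi=0\}$ (where $f(\cdot,v)\equiv 0$) we may assume $\chi>0$. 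Setting $\tilde f(s,v):=f(s,v)/\chi(v)^{1/\alpha}$, the moving-average process with kernel $\tilde f(\cdot,v)$ has a distribution that does not depend on $v$. The ``moreover'' part of Corollary~\ref{coro:MAequal} then provides, for $\nu$-a.e.\ $v$, some $\tau(v)\in\mathbb R^d$ and $\epsilon(v)\in\{\pm 1\}$ with $\tilde f(\cdot,v)=\epsilon(v)\,f_0(\cdot+\tau(v))$, where $f_0:=\tilde f(\cdot,v_0)$ for a fixed reference $v_0$. Substituting back and performing the fiberwise change of variables $s\mapsto s-\tau(v)$, which preserves $\lambda$ and hence transports $M_\alpha$ to an S$\alpha$S random measure $M'_\alpha$ with control measure $\lambda\times\nu$, yields
\[
X_t\,\eqd\,\int_{\mathbb R^d\times V} f_0(t+s')\,\chi(v)^{1/\alpha}\epsilon(v)\,M'_\alpha(\d s',\d v)\,=\,\int_{\mathbb R^d} f_0(t+s')\,N(\d s'),
\]
where $N$ is the S$\alpha$S random measure on $\mathbb R^d$ whose values are the above integrals with $\ind_A(s')$ in place of $f_0(t+s')$; its control measure is $(\int_V\chi\,\d\nu)\,\lambda$, and absorbing the resulting constant into $f_0$ produces the moving average representation \eqref{eq:MA}.

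The main technical hurdle is the joint measurability of $\epsilon(\cdot)$ and $\tau(\cdot)$ extracted from Corollary~\ref{coro:MAequal}: this should be handled by a Kuratowski--Ryll-Nardzewski selection applied to the multifunction $v\mapsto\{(\epsilon,\tau)\in\{\pm 1\}\times\mathbb R^d:\tilde f(\cdot,v)=\epsilon\,f_0(\cdot+\tau)\ \lambda\text{-a.e.}\}$, whose graph is Borel in $V\times\{\pm 1\}\times\mathbb R^d$ by Fubini (it is cut out by the vanishing of a countable family of Borel-measurable $L^\alpha$-type functionals in $(v,\epsilon,\tau)$).
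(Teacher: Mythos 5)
Your proof is correct in outline, but it takes a genuinely different route from the paper's for the ``only if'' direction. The paper first replaces the given representation by a \emph{minimal} mixed-moving-average representation (invoking an adaptation of Pipiras's Section~5 argument from the stationary-increments case to the stationary case), and then argues by contradiction: any split $V=A\cup A^c$ into sets of positive $\nu$-measure gives two nonzero flow-invariant components, indecomposability forces $X\eqd c_1X^A\eqd c_2X^{A^c}$, and the uniqueness of the $\rho(F)$-measurable $r_k$'s under minimality (Theorem~\ref{thm:stationary}) yields the absurdity $c_1\ind_A=c_2\ind_{A^c}$; hence $V$ is an atom. You avoid minimality entirely: from $X^B\eqd c_BX$ for \emph{all} $B\in\calB_V$ you extract the factorization $H_{a,t}(v)=\psi(a,t)\chi(v)$, identify all fibers as scaled/shifted/reflected copies of one kernel via Corollary~\ref{coro:MAequal}, and collapse the mixture by an explicit fiberwise change of variables. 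What each buys: the paper's argument is short but leans on the unproved (here) existence of a minimal mixed-moving-average representation, and its conclusion ``$V$ is a singleton'' only makes sense after that reduction; your argument produces the moving-average representation directly from the original (possibly non-minimal) representation, at the price of the measurable-selection step for $(\epsilon(\cdot),\tau(\cdot))$ and the standard countable-dense-family argument needed to get the identity $H_{a,t}(v)=\psi(a,t)\chi(v)$ outside a single $\nu$-null set (the exceptional set a priori depends on $(a,t)$; you should say explicitly that $L^\alpha$-continuity of translations lets you pass from a countable determining family to all $(a,t)$). Both of these technical points are handled correctly in spirit, so I regard the plan as sound.
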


\begin{proof}  By Corollary~\ref{coro:indecomposable}, the moving average process~\eqref{eq:MA} is indecomposable, since in this case $\phi_t(s) = t+s, t,s\in\bbR^d$ and therefore $\filF_\phi$ is trivial. This proves the `if' part.

Suppose now that $X$ in~\eqref{eq:MMA} is indecomposable.
In Section 5 of Pipiras \cite{pipiras07nonminimal}
it was shown that S$\alpha$S processes with mixed moving average representations
and {\em stationary increments} also have minimal representations of the mixed moving average type.
By using similar arguments, one can show that this is also true for the class of {\em stationary}
mixed moving average processes.

Thus, without loss of generality, we assume that the representation in \eqref{eq:MMA} is minimal.
Suppose now that there exists a set $A \in \calB_V$ with
$\nu(A) >0$ and $\nu(A^c)>0$.  Since $\bbR^d\times A$ and $\bbR^d\times A^c$ are flow-invariant, we have
the stationary decomposition
$
\{X_t\}_{t\in \bbR^d} \eqd \{ X_t^A  + X_t^{A^c}\}_{t\in \bbR^d},
$
where
$$
X^B_t := \int_{\bbR\times V}  \ind_{B}(v) f(t+s,v)M_\alpha(\d s,\d v),\ \ B\in \{A, A^c\}.
$$
Note that both components $X^A =\{X_t^{A}\}_{t\in\bbR^d}$ and
$X^{A^c} =\{X_t^{A^c}\}_{t\in\bbR^d}$ are non-zero because the
representation of $X$ has full support.

Now, since $X$ is indecomposable, there exist positive constants $c_1$ and $c_2$, such that
$X\eqd c_1 X^{A} \eqd c_2 X^{A^c}$. The minimality of the representation and
Theorem \ref{thm:stationary} imply that $c_1 \ind_A = c_2 \ind_{A^c}$ modulo $\nu$, which is impossible.
This contradiction shows that the set $V$ cannot be partitioned into two
disjoint sets of positive measure.  That is, $V$ is a singleton and the mixed
moving average is in fact a moving average.
\end{proof}

\begin{Example}[Doubly stationary processes]
Consider a stationary process $\xi=\{\xi_t\}_{t\in T}$  ($T = \bbZ^d$) supported on the probability space $(E,{\cal E},\mu)$ with $\xi_t\in L^\alpha(E,{\cal E},\mu)$. 
Without loss of generality, we may suppose that $\xi_t (u)= \xi_0\circ \phi_t(u)$, where
$\{\phi_t\}_{t\in T}$ is a $\mu$-measure-preserving flow.

Let $M_\alpha$ be an S$\alpha$S random measure on $(E,{\cal E},\mu)$ with control measure $\mu$.  The stationary S$\alpha$S
process $X = \{X_t\}_{t\in T}$
\equh\label{eq:doublyStochastic}
 X_t := \int_{E}\xi_t(u)M_\alpha(\d u), t\in T
\eque
is said to be {\em doubly stationary} (see~Cambanis et al.~\cite{cambanis87ergodic}). 
By Corollary~\ref{coro:indecomposable}, if $\xi$ is ergodic, then $X$ is indecomposable. \end{Example}

A natural and interesting question raised by a referee is: what happens when $X$ is decomposable and hence $\xi$ is non-ergodic? 
Can we have a direct integral decomposition of the process $X$ into indecomposable components? The following remark partly addresses this question.

\begin{remark} The doubly stationary S$\alpha$S processes are a special case of stationary S$\alpha$S processes generated by {\em positively recurrent flows (actions)}.
As shown in Samorodnitsky~\cite{samorodnitsky05null}, Remark 2.6, each such stationary S$\alpha$S process $X = \{X_t\}_{t\in T}$
can be expressed through a measure-preserving flow (action) on a {\em finite} measure space. Namely, 
\begin{equation}\label{e:pos-recurrent}
 \{X_t\}_{t\in T} \eqd {\Big\{} \int_{E} f_t(u) M_\alpha^{(\mu)}(\d u) {\Big\}}_{t\in T}, \ \ \mbox{ with } \ f_t(u):= c_t(u) f_0\circ \phi_t(u),
\end{equation}
where $M_\alpha^{(\mu)}$ is an S$\alpha$S random measure with a {\em finite} control measure $\mu$ on $(E,{\cal E})$, $\phi=\{\phi_t\}_{t\in T}$ is a $\mu$-preserving 
flow (action), and $\{c_t\}_{t\in T}$ is a co-cycle with respect to $\phi$. In the case when the co-cycle is trivial ($c_t\equiv 1$) and $\mu(E) =1$, the process $X$ is {\em doubly stationary}.

For simplicity, suppose that $T = \bbZ^d$ and without loss of generality let $(E,{\cal E},\mu)$ be a standard Lebesgue space with $\mu(E) =1$. 
The ergodic decomposition theorem (see e.g.~Keller~\cite{keller98equilibrium}, Theorem 2.3.3) implies that there exists conditional probability distributions $\{\mu_u\}_{u\in E}$ with respect to $\calI$ such that $\phi$ is measure-preserving and ergodic with respect to the measures $\mu_u$ for $\mu$-almost all $u\in E$.
Let $\nu$ be another $\phi$-invariant measure on $(E,{\cal E})$ dominating the conditional probabilities $\mu_u$ so that the Radon--Nikodym derivatives
$p(x,u) = (\d \mu_u/\d \nu)(x)$ are jointly measurable on $(E\times E, {\cal E}\otimes {\cal E}, \nu\times \mu)$. Consider
\[
g_t(x,u) = f_t(x) p(\phi_t(x),u)^{1/\alpha}.
\]
Recall that $\nu$ and $\mu_u$ are $\phi$-invariant, whence 
$$
 p(\phi_t(x),u) = \frac{\d \mu_{u}}{\d \nu}(\phi_t(x)) = \frac{\d \mu_u}{\d \nu} (x)= p(x,u),\ \ \mbox{ modulo $\nu\times \mu$.}
$$
Thus, $g_t(x,u) =  f_t(x) (\d \mu_u /\d \nu)^{1/\alpha} (x)$, and for all $a_j\in\bbR,\ t_j \in T,\ j=1,\cdots,n$, we have
\begin{eqnarray*}
 \int_{E^2} {\Big|} \sum_{j=1}^n a_j g_{t_j}(x,u){\Big|}^\alpha \nu(\d x)\mu(\d u)\! &=& \!\int_{E^2} {\Big|} \sum_{j=1}^n a_j f_{t_j}(x){\Big|}^\alpha \frac{\d\mu_u}{\d \nu} (x)\nu(\d x) \mu(\d u)\\
& \! = \!&  \int_{E^2} {\Big|} \sum_{j=1}^n a_j f_{t_j}(x){\Big|}^\alpha \d\mu_u (\d x) \mu(\d u) \\&\! =\! &  \int_{E} {\Big|} \sum_{j=1}^n a_j f_{t_j}(x){\Big|}^\alpha \mu(\d x),
\end{eqnarray*}
where the last equality follows from the identity that $\int_{E} h(x) \mu(\d x)  = \int_{E^2} h(x) \mu_u(\d x) \mu(\d u)$, for all $h\in L^1(E,{\cal E},\mu)$.
We have thus shown that $\indt X$ defined by~\eqref{e:pos-recurrent} has another spectral representation
\equh\label{eq:ergoDecomp}
 \indt X\eqd\bccbb{\int_{E \times E} g_t(x,u) M_\alpha^{(\nu\times \mu)}(\d x,\d u)}_{t\in T}\,,
\eque
where $M_\alpha^{(\nu\times \mu)}$ is an S$\alpha$S random measure on $E\times E$ with control measure $\nu\times \mu$.
It also follows that  for $\mu$-almost all $u\in E$, the process defined by
\[
{X^{(u)}_t}\defe{\int_{E}g_t(x,u)M_\alpha^{(\nu)}(\d x)},\  t\in T,
\]
is indecomposable, where $M_\alpha^{(\nu)}$ has control measure $\nu$. Indeed, as above, one can show that 
$$
\{X^{(u)}_t\}_{t\in T} \eqd
{\Big\{}\int_E f_t(u,x) M_\alpha^{(\mu_u)}(\d x){\Big\}}_{t\in T},
$$
where $M_\alpha^{(\mu_u)}$ has control measure $\mu_u$.  The ergodic decomposition theorem implies that the flow (action) $\phi$ is ergodic 
with respect to $\mu_u$, which by Corollary \ref{coro:indecomposable} implies the indecomposability of  $X^{(u)} = \{X^{(u)}_t\}_{t\in T}$. In this way,~\eqref{eq:ergoDecomp} parallels the mixed moving average representation for stationary S$\alpha$S processes generated by {\it dissipative flows} (see e.g.~Rosi\'nski~\cite{rosinski95structure}).
\end{remark}

\begin{remark} The above construction of the decomposition~\eqref{eq:ergoDecomp} assumes the existence of a $\phi$-invariant measure 
$\nu$ dominating all conditional probabilities $\mu_u,\ u\in E$.  If the measure $\mu$, restricted on the invariant $\sigma$-algebra ${\cal F}_\phi$ is discrete, i.e.\ ${\cal F}_\phi$
consists of countably many atoms under $\mu$, then one can take $\nu\equiv \mu$. In this case, the process $X$ is decomposed into a sum (possibly infinite) of its indecomposable
components:
$$
 X_t = \sum_{k} \int_{E_k} f_t(x) M_\alpha^{(\mu)}(\d x),\
$$
where the $E_k$'s are disjoint $\phi$-invariant measurable sets, such that $E = \cup_k E_k$ and $\phi\vert_{E_k}$ is ergodic, for each $k$. In this case, the $E_k$'s are the
atoms of ${\cal F}_\phi$.

In general, when $\mu\vert_{{\cal F}_\phi}$ is not discrete, the dominating measure $\nu$ if it exists,  may not be $\sigma$-finite. Indeed, since the $\phi_t$'s are 
ergodic for $\mu_u$, it follows that either $\mu_{u'} = \mu_{u''}$ or $\mu_{u'}$ and $\mu_{u''}$ are 
singular, for $\mu$-almost all $u', u''\in E$.  Thus, if ${\cal F}_\phi$ is ``too rich", this singularity feature implies that the measure $\nu$ may not be chosen to  be 
$\sigma$-finite.
\end{remark}


\section{Decomposability of Max-stable Processes}\label{sec:maxstable}

Max-stable processes are central objects in extreme value theory.  They arise in the limit
of independent maxima and thus provide canonical models for the dependence of the extremes (see e.g.~\cite{dehaan06extreme} and the references therein). Without loss of generality we focus here on $\alpha$-Fr\'echet processes.

Recall that a random variable $Z$ has an $\alpha$-Fr\'echet distribution, if
$\P(Z\le x) = \exp( -\sigma^\alpha x^{-\alpha})$ for all $x>0$ with some constant $\sigma>0$.
A process $Y=\{Y_t\}_{t\in T}$ is said to be $\alpha$-Fr\'echet if for all $n\in\mathbb N$, $a_i\ge 0,\  t_i\in T, i =1,\cdots,n,$ the
max-linear combinations $\max\{ a_i Y_{t_i},\ i=1,\cdots,n\} \equiv \bigvee_{i=1}^n a_iY_{t_i}$ are
$\alpha$-Fr\'echet. It is well known that a max-stable process is $\alpha$-Fr\'echet, if and only if it has $\alpha$-Fr\'echet marginals (de Haan~\cite{dehaan78characterization}). In the seminal paper~\cite{dehaan84spectral}, de Haan developed convenient
spectral representations of these processes.  An extremal integral representation, which parallels the
integral representations of S$\alpha$S processes, was developed by Stoev and Taqqu~\cite{stoev06extremal}.  

Let $Y=\indt Y$ be an $\alpha$-Fr\'echet ($\alpha>0$) process.  As in the S$\alpha$S case, if $Y$ is separable
in probability, it has the extremal representation
\begin{equation}\label{rep:extremal}
\{Y_t\}_{t\in T} \eqd {\Big\{} \Eint_S f_t(s) M^\vee_\alpha(\d s) {\Big\}}_{t\in T},
\end{equation}
where $\{f_t\}_{t\in T} \subset L_+^\alpha(S,\calB_S,\mu) = \{f\in L^\alpha(S,\calB_S,\mu): f\geq 0\}$ are non-negative deterministic functions, and where
$M_\alpha^\vee$ is an {\it $\alpha$-Fr\'echet random sup-measure with control measure $\mu$} (see~\cite{stoev06extremal} for more details).  The finite-dimensional distributions of $Y$ are characterized in terms of the spectral functions $f_t$'s as follows:
\equh\label{eq:cdf}
\proba(Y_{t_i} \le y_i,\ i=1,\cdots,n) = \exp {\Big\{} - \int_{S} {\Big(} \max_{1\le i\le n} \frac{f_{t_i}(s) }{y_i}
 {\Big)}^{\alpha} \mu(\d s) {\Big\}},
\eque
for all $y_i>0, t_i \in T,\ i=1,\cdots,n$.

The above representations of max-stable processes mimic those of S$\alpha$S processes~\eqref{rep:integral} and~\eqref{eq:fdd}.
The cumulative distribution functions and max-linear combinations of spectral functions, in the max-stable setting, play the role of characteristic functions and linear combinations in the sum-stable setting, respectively. In fact, the deep connection between the two classes of processes has been
clarified via the notion of {\em association} by Kabluchko~\cite{kabluchko09spectral} and Wang and Stoev~\cite{wang10association}, independently through different perspectives.

In the sequel, assume $0<\alpha<2$. An S$\alpha$S process $X$ and an $\alpha$-Fr\'echet process $Y$
 are said to be {\it associated} if they have a common spectral representation.  That is, if for {\em some} 
 non-negative $\{f_t\}_{t\in T} \subset L_+^\alpha(S,\calB_S,\mu)$, Relations 
 \eqref{rep:integral} and \eqref{rep:extremal} hold. The association is well defined in the following sense: 
 any other set of functions $\indt g\subset \lap(S,\calB_S,\mu)$ is a spectral representation of $X$, if and only if, it is 
 a spectral representation of $Y$ (see \cite{wang10association}, Theorem 4.1). 

\begin{remark} It is well known that $\wt Y = \indt{Y^\alpha}$ is a 1-Fr\'echet process (see e.g.~\cite{stoev06extremal}, Proposition 2.9). Moreover,
if~\eqref{rep:extremal} holds, then $\wt Y$ has spectral functions $\indt{f^\alpha}\subset L_+^1(S,\calB_S,\mu)$. Thus, the exponent
$\alpha>0$ plays no essential role in the dependence structure of $\alpha$-Fr\'echet processes.
Consequently, the notion of association (defined for $\alpha\in(0,2)$) can be used to study $\alpha$-Fr\'echet processes with
arbitrary positive $\alpha$'s.
\end{remark}

The association method can be readily applied to transfer decomposability results for S$\alpha$S processes to
the max-stable setting, where now sums are replaced by maxima.  Namely, let $Y=\{Y_t\}_{t\in T}$ be an
$\alpha$-Fr\'echet process.  If
\begin{equation}\label{e:max-decomp}
\{Y_t\}_{t\in T} \eqd {\Big\{} Y_t^{(1)} \vee \cdots \vee Y_t^{(n)}{\Big\}}_{t\in T},
\end{equation}
for some independent $\alpha$-Fr\'echet processes $Y^{(k)} = \{Y_t^{(k)}\}_{t\in T},\ i=1,\cdots,n$, then we say that
the $Y^{(k)}$'s are {\em components} of $Y$.  By the max-stability of $Y$,
\eqref{e:max-decomp} trivially holds if the $Y^{(k)}$'s are independent copies of
$\{ n^{-1/\alpha} Y_t\}_{t\in T}$. The constant multiples of $Y$ are referred to as trivial components of $Y$ and
 as in the S$\alpha$S case, we are interested in the structure of the non-trivial ones.

To illustrate the association method, we prove the max-stable counterpart of our main result Theorem~\ref{thm:1}. From the proof, we can see that the other results in the sum-stable setting have their natural max-stable 
counterparts by association. 
We briefly state some of these results at the end of this section.
\begin{theorem}\label{thm:2}
Suppose $\indt Y$ is an $\alpha$-Fr\'echet process with spectral representation \eqref{rep:extremal}, where
$F\equiv \indt f\subset L^\alpha_+(S,\calB_S,\mu)$.
Let $\{Y_t^{(k)}\}_{t\in T},\ k=1,\cdots,n$, be independent $\alpha$-Fr\'echet processes.  Then the
decomposition~\eqref{e:max-decomp} holds, if and only if there exist measurable functions
$r_k:S\to[0,1]$, $k=1,\cdots,n$, such that
\equh\label{eq:Yk}
 \indt{Y\topp k}\eqd\bccbb{\Eint_Sr_k(s)f_t(s)M_\alpha^\vee(\d s)}_{t\in T},\ \ k=1,\cdots,n.
\eque
In this case, $\summ k1n r_k(s)^\alpha = 1$, $\mu$-almost everywhere on $S$ and the $r_k$'s in \eqref{eq:Yk} can be chosen
to be $\rho(F)$-measurable, uniquely modulo $\mu$.
\end{theorem}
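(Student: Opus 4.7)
The plan is to deduce Theorem~\ref{thm:2} from Theorem~\ref{thm:1} via the association method of~\cite{wang10association}. The key ingredient is Theorem 4.1 of~\cite{wang10association}: a family $\indt g\subset\lap(S,\calB_S,\mu)$ is a spectral representation of the $\alpha$-Fr\'echet process $Y$ if and only if it is a spectral representation of the associated S$\alpha$S process $X$; in particular, two S$\alpha$S (or two $\alpha$-Fr\'echet) processes sharing a spectral representation are equal in finite-dimensional distribution.

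For the forward direction, assume \eqref{e:max-decomp} holds. I first choose non-negative spectral representations $\{g_t^{(k)}\}_{t\in T}\subset L^\alpha_+(S_k,\calB_{S_k},\mu_k)$ for each $Y^{(k)}$, taking the spaces $(S_k,\mu_k)$ pairwise disjoint. By independence of the $Y^{(k)}$'s together with the basic calculus of $\alpha$-Fr\'echet random sup-measures, $Y$ itself admits the spectral representation $\{g_t\}$ on the disjoint union $(S',\mu')$, $S'=\bigcup_kS_k$, with $g_t|_{S_k}=g_t^{(k)}$. On the S$\alpha$S side, let $X^{(k)}$ denote the S$\alpha$S process with spectral representation $\{g_t^{(k)}\}$ built from mutually independent S$\alpha$S random measures on the $(S_k,\mu_k)$'s; then the $X^{(k)}$'s are independent and $X^{(1)}+\cdots+X^{(n)}$ has spectral representation $\{g_t\}$ on $(S',\mu')$. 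Since $\{g_t\}$ is also a spectral representation of $Y$, Theorem 4.1 of~\cite{wang10association} forces $X\eqd X^{(1)}+\cdots+X^{(n)}$, where $X$ is the S$\alpha$S process with representation $\{f_t\}$ on $(S,\mu)$. Theorem~\ref{thm:1}(ii) then supplies non-negative, $\rho(F)$-measurable functions $r_k:S\to[0,1]$ with $\sum_kr_k^\alpha=1$ $\mu$-a.e.\ such that each $X^{(k)}$ has spectral representation $\{r_kf_t\}$ on $(S,\mu)$. A final invocation of Theorem 4.1 of~\cite{wang10association}, applied to each pair $(Y^{(k)},X^{(k)})$, transfers these representations back to the max-stable side and yields \eqref{eq:Yk}.

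For the converse, given measurable $r_k:S\to[0,1]$ with $\sum_kr_k^\alpha=1$ $\mu$-a.e., I define independent $\alpha$-Fr\'echet processes $Y^{(k)}$ by \eqref{eq:Yk} using independent $\alpha$-Fr\'echet random sup-measures. Writing the joint CDF of $\{\bigvee_kY^{(k)}_{t_i}\}_{i=1}^n$ as the product over $k$ (by independence) of the joint CDFs given by \eqref{eq:cdf}, pulling the factor $r_k(s)^\alpha$ outside the max, and summing over $k$ using $\sum_kr_k^\alpha=1$, one obtains exactly the joint CDF of $\{Y_{t_i}\}_{i=1}^n$ prescribed by \eqref{eq:cdf}; this establishes \eqref{e:max-decomp}.

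The main delicate point is the forward direction: one must argue that independence of the $Y^{(k)}$'s together with the max-equality~\eqref{e:max-decomp} translates, via association, into \emph{independent} S$\alpha$S components whose sum is distributionally equal to $X$. This is handled by observing that association is a property of spectral representations alone and not of any particular random (sup-)measure, so that passing to the disjoint-union representation preserves the association from both sides simultaneously. Once this is in place, the existence, uniqueness modulo $\mu$, non-negativity, and $\rho(F)$-measurability of the $r_k$'s come directly from Theorem~\ref{thm:1}(ii); the pointwise bound $r_k\le 1$ is then automatic from $\sum_kr_k^\alpha=1$ and $r_k\ge 0$.
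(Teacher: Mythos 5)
Your proposal is correct and takes essentially the same route as the paper: both pass to disjoint-union spectral representations of the components, invoke the association theorem (Theorem 4.1 of \cite{wang10association}) to convert the max-decomposition into an S$\alpha$S decomposition, apply Theorem~\ref{thm:1} to obtain the $r_k$'s, and transfer back by association, with the converse handled by a direct CDF computation.
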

\begin{proof}
The `if' part follows from straight-forward calculation of the cumulative distribution functions~\eqref{eq:cdf}. To show the `only if' part, suppose~\eqref{e:max-decomp} holds and $Y\topp k$ has spectral functions $\indt{g\topp k}\subset\lap(V_k,\calB_{B_k},\nu_k)$, $k = 1,\dots,n$. Without loss of generality, assume $\{V_k\}_{k=1,\dots,n}$ to be mutually disjoint and define $g_t(v) = \summ k1n g_t\topp k(v)\ind_{V_k}\in\lap(V,\calB_V,\nu)$ for appropriately defined $(V,\calB_V,\nu)$ (see the proof of Theorem~\ref{thm:1}).

Now, consider the S$\alpha$S process $X$ associated to $Y$. It has spectral functions $\indt f$ and $\indt g$. Consider the S$\alpha$S processes $X\topp k$ associated to $Y\topp k$ via spectral functions $\indt {g\topp k}$ for $k = 1,\dots, n$. By checking the characteristic functions, one can show that $\{X\topp k\}_{k=1,\dots,n}$ form a decomposition of $X$ as in~\eqref{eq:decomposition}. Then, by Theorem~\ref{thm:1}, each S$\alpha$S component ${X\topp k}$ has a spectral representation~\eqref{eq:Xk} with spectral functions $\indt{r_kf}$. But we introduced $X\topp k$ as the S$\alpha$S process associated to $Y\topp k$ via spectral representation $\indt{g\topp k}$. Hence, $X\topp k$ has spectral functions $\indt{g\topp k}$ and $\indt{r_kf}$, and so does $Y\topp k$ by the association (\cite{wang10association}, Theorem 4.1). Therefore,~\eqref{eq:Yk} holds and the rest of the desired results follow.
\end{proof}
Further parallel results can be established by the association method. Consider a stationary $\alpha$-Fr\'echet process $Y$.
If $Y\topp k, k=1,\dots,n$ are independent stationary $\alpha$-Fr\'echet processes such that~\eqref{e:max-decomp} holds, then we
say each $Y\topp k$ is a {\it stationary $\alpha$-Fr\'echet component} of $Y$. The process $Y$ is said to be {\it indecomposable}, if it has no non-trivial stationary component. The following results on
{\em (mixed) moving maxima} (see e.g.~\cite{stoev06extremal} and~\cite{kabluchko09spectral} for more details) follow from
Theorem~\ref{thm:2} and the association method, in parallel to Corollary~\ref{coro:MMA1} on
(mixed) moving averages in the sum-stable setting.
\begin{corollary}\label{coro:max} 
The mixed moving maxima process
\[
{\{Y_t\}_{t\in\mathbb R^d}\eqd\bccbb{\Eint_{\mathbb R^d\times V}f(t+s,v)M_\alpha^\vee(\d s,\d v)}_{t\in\mathbb R^d}}
\]
is indecomposable, if and only if it has a moving maxima representation
\[
{\{Y_t\}_{t\in\mathbb R^d}\eqd \bccbb{\Eint_{\mathbb R^d}f(t+s)M_\alpha^\vee(\d s)}_{t\in\mathbb R^d}}\,.
\]
\end{corollary}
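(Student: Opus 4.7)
The strategy is to transfer Corollary~\ref{coro:MMA1} from the sum-stable to the max-stable setting via the association method. Let $X = \indt X$ denote the S$\alpha$S process associated to $Y$ through the common spectral functions $\{f(t+\cdot,\cdot)\}_{t\in\bbR^d}\subset \lap(\bbR^d\times V,\calB_{\bbR^d\times V},\lambda\times\nu)$, so that $X$ is the mixed moving average in \eqref{eq:MMA}. By Theorem~4.1 of \cite{wang10association}, $X$ and $Y$ share exactly the same family of spectral representations --- in particular the same minimal ones and the same underlying non-singular flow $\phi_t(s,v) = (t+s,v)$. Combining Theorem~\ref{thm:2} with the max-stable counterpart of Theorem~\ref{thm:stationary} (whose proof is obtained by replacing sums with maxima and characteristic functions with cumulative distribution functions~\eqref{eq:cdf}), the stationary $\alpha$-Fr\'echet components of $Y$ are parameterized by exactly the same non-negative, $\phi$-invariant functions $r_k$ with $\summ k1n r_k^\alpha = 1$ that parameterize the stationary S$\alpha$S components of $X$.

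For the `if' direction, suppose $Y$ admits a moving maxima representation. Then the associated $X$ is a moving average, and the shift flow on $\bbR^d$ with Lebesgue control measure has trivial invariant $\sigma$-algebra $\filF_\phi$. By the max-stable analog of Corollary~\ref{coro:indecomposable}, every $\phi$-invariant $r_k$ is $\lambda$-almost everywhere constant, so $Y$ admits only trivial stationary decompositions.

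For the `only if' direction, assume $Y$ is indecomposable and argue by contradiction that $V$ must be essentially a singleton. Following the proof of Corollary~\ref{coro:MMA1}, I would first replace the representation by a minimal one of mixed moving maxima type; the minimal-representation result of Pipiras~\cite{pipiras07nonminimal} for stationary mixed moving averages carries over to the max-stable setting via association, since minimality is defined purely through the ratio $\sigma$-algebra $\rho(F)$. Suppose for contradiction that there exists $A\in\calB_V$ with $\nu(A)>0$ and $\nu(A^c)>0$. The sets $\bbR^d\times A$ and $\bbR^d\times A^c$ are $\phi$-invariant, hence setting
\[
Y^B_t := \Eint_{\bbR^d\times V}\ind_B(v)f(t+s,v)M_\alpha^\vee(\d s,\d v),\qquad B\in\{A,A^c\},
\]
yields a stationary max-decomposition $Y\eqd Y^A\vee Y^{A^c}$ into non-zero components (full support of the minimal representation). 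If instead $Y\eqd c_1 Y^A$ for some $c_1>0$, then Theorem~\ref{thm:2} together with minimality would force $c_1\ind_A = 1$ modulo $\nu$, contradicting $\nu(A^c)>0$; symmetrically $Y\not\eqd c_2 Y^{A^c}$. Thus $Y^A$ is a non-trivial stationary component of $Y$, contradicting indecomposability.

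The main obstacle is importing the minimal-representation result for stationary mixed moving averages into the max-stable setting. This step reduces, via Theorem~4.1 of \cite{wang10association}, to the fact that minimality depends only on the ratio $\sigma$-algebra of the spectral functions and is therefore an intrinsic property shared by any associated pair of sum- and max-stable processes.
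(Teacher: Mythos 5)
Your proposal is correct and follows exactly the route the paper intends: the paper gives no separate proof of Corollary~\ref{coro:max}, stating only that it ``follows from Theorem~\ref{thm:2} and the association method, in parallel to Corollary~\ref{coro:MMA1}'', which is precisely the argument you carry out (association of $Y$ with the mixed moving average $X$, transfer of the component structure, and a rerun of the proof of Corollary~\ref{coro:MMA1} with maxima in place of sums). Your closing remark about minimality being intrinsic to the ratio $\sigma$-algebra is a fair justification of the one step the paper also leaves implicit.
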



\section{Proof of Theorem~\ref{thm:1}}\label{sec:proofs}
We will first show that Theorem~\ref{thm:1} is true when $\indt f$ is minimal (Proposition~\ref{prop:minimal}), and then we complete the proof by relating a  general spectral representations to a minimal one. This technique is standard in the literature of representations of S$\alpha$S processes (see e.g.~Rosi\'nski~\cite{rosinski95structure}, Remark 2.3).
We start with a useful lemma.
\begin{lemma}\label{lem:unique}
Let $\indt f\subset L^\alpha(S,\calB_S,\mu)$ be a minimal representation of an S$\alpha$S process.
For any two bounded $\calB_S$-measurable functions $r\topp1$ and $r\topp2$, we have
\[
\bccbb{\int_Sr\topp1f_t\d M_\alpha}_{t\in T} \eqd\bccbb{\int_Sr\topp2f_t\d M_\alpha}_{t\in T}\,,
\]
if and only if $|r\topp1| = |r\topp2|\ \mbox{ modulo }\mu$.
\end{lemma}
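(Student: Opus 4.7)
The ``if'' direction is a direct application of the characteristic function formula \eqref{eq:fdd}: both processes have finite-dimensional characteristic exponents
$\exp\bigl(-\int_S|\summ j1n a_j f_{t_j}|^\alpha |r^{(k)}|^\alpha\,d\mu\bigr)$,
and these coincide for $k=1,2$ when $|r^{(1)}|^\alpha = |r^{(2)}|^\alpha$ modulo $\mu$.

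For the ``only if'' direction, equality of finite-dimensional distributions together with \eqref{eq:fdd} yields the fundamental identity
\[
 \int_S \Bigl|\summ j1n a_j f_{t_j}\Bigr|^\alpha |r^{(1)}|^\alpha\,d\mu = \int_S \Bigl|\summ j1n a_j f_{t_j}\Bigr|^\alpha |r^{(2)}|^\alpha\,d\mu
\]
for every finite $\vec a \in \bbR^n$ and $\vec t \in T^n$. My plan is to invoke Rosi\'nski's uniqueness theorem for minimal representations. Since $Y^{(k)} \defe \{\int_S r^{(k)} f_t\,dM_\alpha\}_{t\in T}$ are equal in distribution, they share a common minimal representation $\{\tilde f_t\}_{t\in T}$ on some standard Lebesgue space $(\tilde S,\tilde\mu)$. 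For each $k\in\{1,2\}$ the theorem supplies measurable maps $\Phi_k:\{r^{(k)}\neq 0\}\to\tilde S$ and $h_k:\{r^{(k)}\neq 0\}\to\bbR\setminus\{0\}$ such that
\[
 r^{(k)}(s)\,f_t(s) = h_k(s)\,\tilde f_t(\Phi_k(s)),\qquad \tilde\mu(B) = \int_{\Phi_k^{-1}(B)} |h_k|^\alpha\,d\mu.
\]
Forming ratios of the first identity over distinct $t$'s eliminates $h_k$ and, by minimality of $\{\tilde f_t\}$, identifies $\Phi_k(s)$ as the unique point of $\tilde S$ whose $\tilde f_t$-ratios match $f_{t_1}(s)/f_{t_2}(s)$. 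This characterization is independent of $k$, so $\Phi_1 = \Phi_2 \Edef \Phi$ modulo $\mu$. Minimality of $\{f_t\}$ then forces $\Phi$ to be essentially a bi-measurable bijection onto $\tilde S$, and comparing the two push-forward identities gives $|h_1|^\alpha = |h_2|^\alpha$ $\mu$-a.e., whence $|r^{(1)}| = |r^{(2)}|$ $\mu$-a.e.\ on the common support.

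The main obstacle is establishing that the supports $\{r^{(1)}\neq 0\}$ and $\{r^{(2)}\neq 0\}$ coincide modulo $\mu$. A pigeonhole argument handles this: if, say, $\{r^{(1)} = 0\}\cap\{r^{(2)}\neq 0\}$ had positive $\mu$-measure, then its image under $\Phi_2$ would carry positive $\tilde\mu$-measure and, by the ratio-based characterization of $\Phi_k$, be disjoint from $\Phi_1(\{r^{(1)}\neq 0\})$, contradicting the fact that both images exhaust $\tilde S$ modulo $\tilde\mu$. An alternative route, bypassing the external appeal to Rosi\'nski's comparison theorem, would work directly with the displayed identity: push $d\nu_k \defe |r^{(k)}|^\alpha\,d\mu$ forward through $(f_{t_1},\ldots,f_{t_n})$ to $\bbR^n$, apply the classical uniqueness of spectral measures of S$\alpha$S random vectors to obtain equality of ``spherical parts'' on $S^{n-1}$, and use minimality of $\{f_t\}$ to resolve, in the limit $n\to\infty$, the radial ambiguity intrinsic to each finite-dimensional projection; the central difficulty there is the delicate passage to the infinite-dimensional limit.
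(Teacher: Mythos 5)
Your argument is correct and is essentially the paper's own proof: both exploit that multiplying by $r^{(k)}$ leaves the ratios $f_{t_1}/f_{t_2}$ unchanged, invoke Rosi\'nski's rigidity theorem for (minimal) representations to produce a point map determined by those ratios, and then use minimality of $\{f_t\}_{t\in T}$ on all of $S$ to force that map to be the identity and the supports of $r^{(1)},r^{(2)}$ to agree modulo $\mu$ (your routing through a third common minimal representation, rather than a single bijection $\Psi:S^{(1)}\to S^{(2)}$, is only a cosmetic difference). The one step you state rather than prove --- that the ratios determine the point, so $\Phi_1=\Phi_2$ and the images in your pigeonhole argument are disjoint --- is exactly where the paper passes from the ratio identity to $\mu\big(\Psi(A\cap S^{(1)})\,\Delta\,(A\cap S^{(2)})\big)=0$ for all $A\in\rho(F)$, upgrades this to all $A\in\calB_S$ by minimality, and invokes Rosi\'nski (2006), Proposition 5.1, to conclude the map is the identity modulo $\mu$; ratios separate points only modulo null sets, so this $\sigma$-algebra detour (or an equivalent) is needed to make your ``unique point'' characterization rigorous.
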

\begin{proof} The 'if' part is trivial. We shall prove now the 'only if' part.
Let $S\topp k:={\rm supp}(r\topp k),\ k=1,2$ and note that since $\indt f$ is minimal,
then $\indt {r\topp k f}$, are minimal representations, restricted to $S\topp k$, $k=1,2$, respectively.
Since the latter two representations correspond to the same process, by Theorem 2.2 in \cite{rosinski95structure},
there exist a bi-measurable, one-to-one and onto point mapping $\Psi :S\topp 1\to S\topp 2$ and
a function $h:S\topp1\to\mathbb R\setminus\{0\}$, such that, for all $t\in T$,
 \begin{equation}\label{e:f_12}
 r\topp1(s) f_t(s) = r\topp2\circ\Psi(s) f_t\circ \Psi(s)h(s)\,,\mbox{ almost all } s\in S\topp1,
 \end{equation}
 and
 \equh\label{eq:h}
 \ddfrac\mu\Psi\mu = |h|^\alpha\,, \mu\mbox{-almost everywhere.}
 \eque
 It then follows that, for almost all $s\in S\topp 1$,
 \begin{equation}\label{e:ratio}
 \frac{f_{t_1}(s)}{f_{t_2}(s)} =  \frac{r\topp 1(s) f_{t_1}(s)}{r\topp1(s) f_{t_2}(s)} = \frac{f_{t_1}\circ\Psi(s)}{f_{t_2}\circ\Psi(s)}.
 \end{equation}
 Define $ R_\lambda(t_1,t_2) = \{s\, :\,  f_{t_1}(s)/f_{t_2}(s)\le \lambda\}$
 and note that by \eqref{e:ratio},
 for all $A\equiv R_\lambda(t_1,t_2)$,
 \equh\label{eq:Delta}
 \mu\spp{\Psi(A\cap S\topp 1)\Delta(A\cap S\topp2)} = 0\,.
 \eque
In fact, one can show that Relation \eqref{eq:Delta} is also valid for all $A \in \rho(F)\equiv\sigma(R_\lambda(t_1,t_2):\lambda\in\mathbb R, t_1,t_2\in T)$.
Then, by minimality, \eqref{eq:Delta} holds for all $A\in\calB_S$. In particular, taking $A$ equal to $S\topp1$ and
$S\topp2$, respectively, it follows that $\mu(S\topp1\Delta S\topp2) = 0$. Therefore, writing $\wt S \defe S\topp1\cap S\topp2$, we have
 \equh\label{eq:wtS}
 \mu(\Psi(A\cap \widetilde S) \Delta (A \cap \widetilde S)) = 0,\ \ \mbox{ for all }A\in \calB_S\,.
 \eque
 
This implies that $\Psi(s) = s$, for $\mu$-almost all $s\in \wt S$. To see this, let $\calB_{\wt S} = \calB_S\cap \wt S$ denote the $\sigma$-algebra $\calB_S$ restricted to $\wt S$. Observe that for all
$A\in\calB_{\wt S}$, we have $\ind_A = \ind_A\circ\Psi$, for $\mu$-almost all $s\in\wt S$, and trivially $\sigma(\ind_A:A\in\calB_{\wt S}) = \calB_{\wt S}$.  Thus, by the second
part of Proposition 5.1 in~\cite{rosinski06minimal}, it follows that $\Psi(s) = s$ modulo $\mu$ on $\wt S$. This and \eqref{eq:h} imply
that $h(s) \in \{\pm1\}$, almost everywhere. Plugging $\Psi$ and $h$ into~\eqref{e:f_12} yields the desired result.
\end{proof}

\begin{proposition}\label{prop:minimal} Theorem~\ref{thm:1} is true when $\indt f$ is minimal.
\end{proposition}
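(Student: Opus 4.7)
\emph{Plan for Proposition~\ref{prop:minimal}.} My plan is to prove both directions of Theorem~\ref{thm:1}(i) under the minimality hypothesis and then deduce (ii) from the construction, using throughout that minimality forces $\rho(F)\equiv\calB_S$ modulo $\mu$-null sets, so that $\calB_S$-measurability already gives $\rho(F)$-measurability. The ``if'' direction should be routine: given measurable $r_k:S\to[-1,1]$ with $\sum_k|r_k|^\alpha=1$ $\mu$-a.e., I simply compute the joint characteristic function of the $X^{(k)}$'s defined by \eqref{eq:Xk} using independence and the scale formula~\eqref{eq:fdd},
\[
\esp\exp\bpp{-i\sum_j a_j\sum_k X_{t_j}^{(k)}} = \exp\bpp{-\int_S\bigl(\sum_k|r_k|^\alpha\bigr)\bigl|\sum_j a_jf_{t_j}\bigr|^\alpha\d\mu},
\]
which collapses to the characteristic function of $X$. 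The same computation makes transparent why $\sum_k|r_k|^\alpha=1$ $\mu$-a.e.\ is also \emph{necessary} for any candidate $r_k$'s satisfying \eqref{eq:Xk}.

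For the ``only if'' direction, I would start from arbitrary spectral representations $\{g_t^{(k)}\}_{t\in T}\subset L^\alpha(V_k,\calB_{V_k},\nu_k)$ of the components $X^{(k)}$ and pool them into a single representation of $X$ on the disjoint union $V:=\bigsqcup_k V_k$ (with $\nu|_{V_k}=\nu_k$) via $g_t(v):=g_t^{(k)}(v)$ for $v\in V_k$; independence of the $X^{(k)}$'s makes $\{g_t\}$ a bona fide spectral representation of $X$ on $(V,\nu)$. Since $\{f_t\}$ is minimal, Theorem~2.2 of Rosi\'nski~\cite{rosinski95structure} supplies measurable maps $\Phi:V\to S$ and $h:V\to\mathbb R\setminus\{0\}$ with $g_t(v)=h(v)\,f_t(\Phi(v))$ $\nu$-a.e.\ and the push-forward identity $\Phi_*(|h|^\alpha\nu)=\mu$. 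Defining the measures $\mu_k(A):=\int_{V_k}\ind_A\circ\Phi\,|h|^\alpha\,\d\nu$ on $\calB_S$, the push-forward identity forces $\sum_k\mu_k=\mu$, so each $\mu_k\ll\mu$; setting $r_k:=(\d\mu_k/\d\mu)^{1/\alpha}\in[0,1]$ yields non-negative $\calB_S$-measurable functions with $\sum_k r_k^\alpha=1$ $\mu$-a.e. A change-of-variables argument with $F(s)=|\sum_j a_jf_{t_j}(s)|^\alpha$ then identifies the scale of any linear combination of the $X_{t_j}^{(k)}$'s with that of the corresponding linear combination of $\int_S r_kf_t\,\d M_\alpha$, establishing \eqref{eq:Xk}.

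For part (ii), the constructed $r_k$'s are automatically $\rho(F)$-measurable by minimality, and uniqueness among non-negative choices is exactly Lemma~\ref{lem:unique}: any other non-negative $r_k'$ representing the same $X^{(k)}$ must satisfy $r_k'=|r_k'|=|r_k|=r_k$ $\mu$-a.e. Given a general $r_k:S\to[-1,1]$ satisfying \eqref{eq:Xk} from part (i), the symmetry of S$\alpha$S integrals gives $\int r_kf_t\,\d M_\alpha\eqd\int|r_k|f_t\,\d M_\alpha$, so I may replace $r_k$ by $|r_k|$, which by uniqueness coincides with the construction. The main technical point is invoking the correct form of Rosi\'nski's representation theorem in the \emph{asymmetric} case where only $\{f_t\}$ (and not necessarily $\{g_t\}$) is minimal, so as to secure the push-forward identity $\Phi_*(|h|^\alpha\nu)=\mu$; once this is in hand, the remaining change-of-variables and Radon--Nikodym bookkeeping is straightforward.
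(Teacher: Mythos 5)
Your proposal is correct and follows essentially the same route as the paper: the ``if'' direction via characteristic functions, the ``only if'' direction by pooling the component representations on a disjoint union, invoking Rosi\'nski's relation between a general and a minimal representation to get $\Phi$, $h$ and the push-forward identity, and then defining $r_k:=(\d\mu_k/\d\mu)^{1/\alpha}$ with $\mu_k$ the push-forward of $|h|^\alpha\nu$ restricted to $V_k$; part (ii) likewise reduces to minimality plus Lemma~\ref{lem:unique}. The only cosmetic difference is the citation (the paper uses Remark~2.5 of Rosi\'nski rather than Theorem~2.2 for the asymmetric, one-side-minimal form you correctly identify as the needed ingredient).
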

\begin{proof} {\em We first prove the 'if' part}. The result follows readily 
by using characteristic functions.  Indeed, suppose that the $X^{(k)} = \{X_t^{(k)}\}_{t\in T},$
 $k=1,\dots,n$ are independent and have representations as in \eqref{eq:Xk}.  Then, for all 
 $a_j\in\bbR,\ t_j\in T,\ j=1,\cdots, m,$ we have
\begin{multline}\label{e:lemma-1}
\esp\exp\bpp{i\summ j1ma_jX_{t_j}} = \exp\bpp{-\int_S\babs{\summ j1ma_jf_{t_j}}^\alpha\d\mu}\\
= \prod_{k=1}^n\exp\bpp{-\int_S\babs{\summ j1ma_jr_kf_{t_j}}^\alpha\d\mu} = \prod_{k=1}^n\esp\exp\bpp{i\summ j1ma_jX_{t_j}\topp k}\,,
\end{multline}
where the second equality follows from the fact that $\summ k1n|r_k(s)|^\alpha = 1,$ for  $\mu$-almost all 
$s\in S.$  Relation \eqref{e:lemma-1} implies the decomposition \eqref{eq:decomposition}.

{\em We now prove the 'only if' part}.  Suppose that \eqref{eq:decomposition} holds and
let $\indt{f\topp k}\subset L^\alpha(V_k,\calB_{V_k},\nu_k),\ k=1,\dots,n$ be representations for the independent components
$\indt {X\topp k}, k=1,\dots,n$, respectively, and without loss of generality,  assume that $\{V_k\}_{k=1,\dots,n}$ are mutually disjoint.
Introduce the measure space $(V,\calB_V,\nu)$, where $V:= \bigcup_{k=1}^nV_k$,
$\calB_V:= \{\bigcup_{k=1}^n A_k,\ A_k\in \calB_{V_k},\ k=1,\dots,n\}$ and $\nu(A):= \summ k1n\nu_k(A\cap V_k)$ for
all $A\in \calB_V$.

By \eqref{eq:decomposition}, it follows that
 $\{X_t\}_{t\in T} \eqd\{\int_V g_t \d \wb M_\alpha\}_{t\in T}$, with $g_t(u)\defe\summ k1n f_t\topp k(u)\ind_{V_k}(u)$ and $\wb M_\alpha$ an S$\alpha$S random
 measure on $(V,\calB_V)$ with control measure $\nu$.

 Thus, $\{f_t\}_{t\in T} \subset L^\alpha(S,\calB_S,\mu)$ and $\{g_t\}_{t\in T} \subset L^\alpha(V,\calB_V,\nu)$ are two representations of the
 same process $X$, and by assumption the former is {\em minimal}. Therefore, by Remark 2.5 in~\cite{rosinski95structure},
 there exist modulo $\nu$ unique functions $\Phi:V\to S$ and $h:V\to\mathbb R\setminus{\{0\}}$, such that, for all $t\in T$,
\equh\label{eq:ftk}
g_t(u) = h(u)f_t\circ\Phi(u)\,,\mbox{ almost all }u\in V\,,
\eque
where moreover $\mu = \nu_h\circ\Phi\inv$ with $\d\nu_h = |h|^\alpha\d\nu$.

Recall that $V$ is the union of mutually disjoint sets $\{V_k\}_{k=1,\dots,n}$. For each $k=1,\dots,n$, let $\Phi_k:V_k \to S_k := \Phi(V_k)$ be the restriction
of $\Phi$ to $V_k$, and define the measure $\mu_k(\cdot) \defe\nu_{h,k}\circ\Phi_k\inv(\,\cdot\,\cap S_k)$ on $(S,\calB_S)$ with
$\d\nu_{h,k}\defe|h|^\alpha\d\nu_k$. Note that $\mu_k$ has support $S_k$, and the Radon--Nikodym derivative $\d\mu_k/\d\mu$ exists.
We claim that~\eqref{eq:Xk} holds with $r_k\defe(\d\mu_k/\d\mu)^{1/\alpha}$. To see this, observe that
for all $ m\in\mathbb N, a_1, \ldots, a_m \in\mathbb R, t_1, \ldots, t_m \in T$, 
\[
\int_S\babs{\summ j1m a_jr_kf_{t_j}}^\alpha\d\mu = \int_{S_k}\babs{\summ j1m a_jf_{t_j}}^\alpha\d\mu_k = \int_{V_k}\babs{\summ j1ma_jhf_{t_j}\circ\Phi_k}^\alpha\d\nu_k\,,
\]
which, combined with~\eqref{eq:ftk}, yields \eqref{eq:Xk} because $g_t\vert_{V_k} = f_t^{(k)}$.

Note also that $\summ k1n\mu_k = \mu$ and thus $\summ k1nr_k^\alpha=1$.
This completes the proof of  part {\it (i)} of Theorem \ref{thm:1} in the case when $\{f_t\}_{t\in T}$ is minimal.

To prove part {\it (ii)}, note that the $r_k$'s above are in fact non-negative and $\calB_S$-measurable.  Note also that by minimality,
the $r_k$'s have versions $\wt r_k$'s that are $\rho(F)$-measurable,  i.e.\ $r_k = \wt r_k$ modulo $\mu$.
Their uniqueness follows from Lemma \ref{lem:unique}.
\end{proof}
\begin{proof}[{\it Proof of Theorem \ref{thm:1}}]
\noindent {\it (i)} The `if' part follows by using characteristic functions as in the proof of Proposition~\ref{prop:minimal} above.

\noindent{Now, we prove the `only if' part.} Let $\{\wt f_t\}_{t\in T} \subset L^\alpha (\wt S,\calB_{\wt S}, \wt \mu)$ be a minimal representation of $X$.  As in the proof of Proposition~\ref{prop:minimal}, by Remark 2.5 in~\cite{rosinski95structure},
 there exist modulo $\mu$ unique functions $\Phi:S\to \wt S$ and $h:S\to\mathbb R\setminus{\{0\}}$, such that,
 for all $t\in T$,
\equh\label{eq:ftk-thm-1}
 f_t(s) = h(s)\wt f_t\circ\Phi(s)\,,\mbox{ almost all }s\in S,
\eque
where $\wt\mu = \mu_h\circ\Phi\inv$ with $\d \mu_h = |h|^\alpha\d\mu$.

Now, by Proposition~\ref{prop:minimal}, if the decomposition \eqref{eq:decomposition} holds, then there exist unique non-negative functions $\wt r_k,\ k=1,\cdots,n$, such that
\begin{equation}\label{e:thm-1.1}
\{ X^{(k)}_t\}_{t\in T} \stackrel{d}{=} {\Big\{} \int_{\wt S} \wt r_k \wt f_t \d \wt M_\alpha  {\Big\}}_{t\in T},\ \ k=1,\cdots,n,
\end{equation}
and $\sum_{k=1}^n \wt r_k^\alpha = 1$ modulo $\wt \mu$.  Here $\wt M_\alpha$ is an  S$\alpha$S measure on
$(\wt S,\calB_{\wt S})$ with control measure $\wt \mu$. Let $r_k(s) := \wt r_k \circ \Phi(s)$ and note that
by using \eqref{eq:ftk-thm-1} and a change of variables,  for all $a_j \in \bbR, t_j\in T,\ j=1,\cdots,m$, we obtain
\begin{equation}\label{e:thm-1.2}
\int_{S} {\Big|}\sum_{j=1}^m a_j r_k(s) f_{t_j}(s) {\Big|} \mu(\d s) = \int_{\wt S} {\Big|}\sum_{j=1}^m a_j
\wt r_k(s) \wt f_{t_j}(s) {\Big|} \wt \mu(\d s).
\end{equation}
This, in view of Relation \eqref{e:thm-1.1}, implies \eqref{eq:Xk}. Further, the fact that
$\sum_{k=1}^n\wt r_k^\alpha =1$ implies $\sum_{k=1}^nr_k^\alpha =1$, modulo $\mu$, because
the mapping $\Phi$ is non-singular, i.e.\ $\wt \mu\circ \Phi^{-1} \sim \mu$.
 This completes the proof of part {\it (i)}.

\medskip We now focus on proving part {\it (ii)}.  Suppose that \eqref{eq:Xk} holds for two choices of $r_k$, namely
$r_k'$ and $r_k''$.  Let also $r_k'$ and $r_k''$ be non-negative and measurable with respect to $\rho(F)$.  We claim that 
\equh\label{eq:rhoF}
\rho(F)\sim \Phi\inv(\rho(\wt F))
\eque
and defer the proof to the end. Then, since the minimality implies that $\calB_{\wt S}\sim\rho(\wt F)$. 
$r_k'$ and $r_k'' $ are measurable with respect
to $\rho(F) \sim \Phi^{-1}(\calB_{\wt S})$. Now, Doob--Dynkin's lemma (see e.g.~Rao~\cite{rao05conditional}, p.~30) implies that
\begin{equation}\label{e:thm-1.3}
 r_k'(s) = \wt r_k'\circ \Phi(s)\ \ \mbox{ and } \ \  r_k''(s) = \wt r_k'' \circ \Phi(s),\ \ \mbox{ for $\mu$ almost all $s$},
\end{equation}
where $\wt r_k'$ and $\wt r_k''$ are two $\calB_{\wt S}$-measurable functions.
By using the last relation and a change of variables, we obtain that \eqref{e:thm-1.2} holds with $(r_k,\wt r_k)$ replaced by $(r_k',\wt r_k')$ and $(r_k'',\wt r_k'')$, respectively.  Thus both $\indt{\wt r_k'\wt f}$ and
$\indt{\wt r_k''\wt f}$ are representations of the $k$-th component of $X$. Since $\{\wt f_t\}_{t\in T}$ is a minimal representation of $X$,
Lemma~\ref{lem:unique} implies that $\wt r_k' = \wt r_k''$ modulo $\wt \mu$. This, by \eqref{e:thm-1.3} and the
non-singularity of $\Phi$ yields $r_k' = r_k''$ modulo $\mu$. 

It remains to prove~\eqref{eq:rhoF}
Relation \eqref{eq:ftk-thm-1} and the fact that $h(s)\not=0$ imply that
for all $\lambda$ and $t_1,t_2\in T$,
$
 \{ f_{t_1}/f_{t_2} \le \lambda \}  = \Phi^{-1} (\{ \wt f_{t_1}/\wt f_{t_2} \le \lambda \} )\mbox{ modulo }\mu.
$
Thus the classes of sets ${\cal C}:=  \sccbb{ \{f_{t_1}/f_{t_2} \le \lambda \},\ t_1,t_2\in T,\ \lambda\in \bbR}$
and $\wt {\cal C}:=\sccbb{ \Phi^{-1} (\{ \wt f_{t_1}/\wt f_{t_2} \le \lambda \}),\ t_1,t_2\in T,\ \lambda\in \bbR} $ are equivalent. That is, for all $A \in {\cal C}$, there exists $\wt A\in \wt{\cal C}$, with $\mu(A\Delta \wt A) = 0$ and vice versa.

Define
$$
 \wt {\cal G} = \bccbb{\Phi^{-1}(A): A\in \rho(\wt F)\mbox{ such that\,} \mu(\Phi^{-1}(A) \Delta B) = 0  \mbox{ for some } B\in \sigma({\cal C})}.
$$
Notice that $\wt {\cal G}$ is a $\sigma$-algebra and since $\wt {\cal C} \subset \wt {\cal G} \subset
\Phi^{-1}(\rho(\wt F))$, we obtain that $\sigma(\wt {\cal C}) = \Phi^{-1}(\rho(\wt F)) \equiv
\wt {\cal G}$.  This, in view of definition of $ \wt {\cal G}$, shows that for all $\wt A \in \sigma(\wt {\cal C})$, exists
$A\in \sigma({\cal C})$ with $\mu(A\Delta \wt A) = 0$.  In a similar way one can show that each element of
$\sigma({\cal C})$ is equivalent to an element in $\sigma(\wt {\cal C})$, which completes the proof of the
desired equivalence of the $\sigma$-algebras.
\end{proof}

\noindent{\bf Acknowledgment}
 Yizao Wang and Stilian Stoev's research were partially supported by the NSF grant DMS--0806094 at the University of Michigan, Ann Arbor. The authors were grateful to Zakhar Kabluchko for pointing out two mistakes in a previous version and for many helpful discussions. They also thank two anonymous referees for helpful comments and suggestions.


\def\cprime{$'$} \def\polhk#1{\setbox0=\hbox{#1}{\ooalign{\hidewidth
  \lower1.5ex\hbox{`}\hidewidth\crcr\unhbox0}}}

\end{document}